\let\accentvec\vec 
\let\vec\accentvec 
\newcommand{\la}{\langle}
\newcommand{\ra}{\rangle}
\newcommand{\Iff}{\Leftrightarrow}
\renewcommand{\iff}{\leftrightarrow}
\newcommand{\Implies}{\Rightarrow}
\newcommand{\mc}{\mathcal}
\newcommand{\mf}{\mathfrak}
\newcommand{\inter}{\cap}
\renewcommand{\to}{\rightarrow}
\newcommand{\restrict}{\upharpoonright}
\renewcommand{\and}{\,\&\,}
\newcommand{\Q}{\mathbb{Q}}
\newcommand{\nil}{\varnothing}
\newtheorem{thm}{Theorem}
\newtheorem{cor}[thm]{Corollary}
\newtheorem{lem}[thm]{Lemma}
\newtheorem{pro}[thm]{Proposition}
\theoremstyle{definition}
\newtheorem{df}[thm]{Definition}
\theoremstyle{remark}
\newtheorem{rem}[thm]{Remark}
\newtheorem{que}[thm]{Question}
\begin{document}
	\title{Numberings and randomness\footnote{The final publication is available at Springer via
	\href{http://dx.doi.org/10.1007/978-3-642-03073-4_6}{http://dx.doi.org/10.1007/978-3-642-03073-4\_6}}}
	\author{Katie Brodhead \\ Bj\o rn Kjos-Hanssen\footnote{Partially supported by NSF grant DMS-0652669.}}
	\maketitle

	\begin{abstract} 
	We prove various results on effective numberings and Friedberg numberings of families related to algorithmic randomness. The family of all Martin-L\"of random left-computably enumerable reals has a Friedberg numbering, as does the family of all $\Pi^0_1$ classes of positive measure. On the other hand, the $\Pi^0_1$ classes contained in the Martin-L\"of random reals do not even have an effective numbering, nor do the left-c.e. reals satisfying a fixed randomness constant. For $\Pi^0_1$ classes contained in the class of reals satisfying a fixed randomness constant, we prove that at least an effective numbering exists.
	\end{abstract}

	\section{Introduction}
	The general theory of numberings was initiated in the mid-1950s by Kolmogorov, and continued by Mal'tsev and Ershov
	\cite{Ers99}.  A \emph{numbering}, or \emph{enumeration}, of a collection $C$ of objects is a surjective map $F:\omega \rightarrow
	C$. In one of the earliest results, Friedberg \cite[1958]{Fri58} constructed an injective numbering $\psi$ of the $\Sigma^0_1$ or computably
	enumerable (\emph{c.e.})  sets such that the relation ``$n \in\psi(e)$'' is itself $\Sigma^{0}_{1}$. In a more general and informal sense,  a numbering $\psi$ of a collection of objects all having complexity $\mathcal{C}$ (such as $n$-c.e., $\Sigma^{0}_{n}, \mbox{ or
	}\Pi^{0}_{n}$) is called \emph{effective} if the relation ``$x\in\psi(e)$'' has complexity $\mathcal{C}$. If in addition the numbering is injective, then it is called a \emph{Friedberg numbering}.  

	Brodhead and Cenzer~\cite{BrC08} showed that there is an effective Friedberg numbering of the $\Pi^{0}_{1}$ classes in Cantor space $2^\omega$. They showed that effective numberings exist of the $\Pi^0_1$ classes that are {homogeneous}, and {decidable}, but not of the families consisting of $\Pi^0_1$ classes that are of measure zero, thin, perfect thin, small, very small, or nondecidable, respectively.  

	In this article we continue the study of existence of numberings and Friedberg numberings for subsets of $\omega$ and $2^\omega$. Many of our results are related to algorithmic randomness and in particular Martin-L\"of randomness; see the books of Li and Vit\'anyi \cite{LV} and Nies \cite{Nies:book}.  

	We now outline some notation and definitions used throughout. A subset $T$ of $2^{<\omega}$ is a \emph{tree} if it is closed under prefixes.  The set $[T]$ of infinite paths through $T$ is defined by $X \in [T] \iff (\forall n) X \restrict n \in T$, where $X\restrict n$ denotes the initial segment $\la X(0),X(1),\hdots,X(n-1)\ra$. Next, $P$ is a $\Pi^{0}_{1}$ class if $P = [T]$ for some computable tree $T$. Let $\sigma^{\frown}\tau$ denote the concatenation of $\sigma$ with $\tau$ and let $\sigma^{\frown}i$ denote $\sigma ^{\frown}\la i\ra$ for $i \in \omega$. The prefix ordering of strings is denoted by $\preceq$, so we have $\sigma\preceq\sigma^\frown\tau$.  The string $\sigma\in T$ is a \emph{dead end} if no extension $\sigma^{\frown}i$ is in $T$. For any $\sigma \in 2^{<\omega}$, $[\sigma]$ is the cone consisting of all infinite sequences extending $\sigma$. For a set of strings $W$, $[W]^\preceq=\bigcup_{\sigma\in W} [\sigma]$.

	\section{Families of left-c.e.\ reals}

	\subsection{Basics}

	For our definition of left-c.e. reals we will follow the book of Nies \cite{Nies:book}. Let $\mathbb Q_2$ be the set of dyadic rationals $\{\frac{a}{2^b} \le 1: a,b\in\omega\}$. For a dyadic rational $q$ and real $x\in 2^\omega$, we say that $q<x$ if $q$ is less than the real number $\sum_{i\in\omega} x(i) 2^{-(i+1)}$.
	\begin{df}
	A real $x\in 2^\omega$ is \emph{left-c.e.} if $\{q\in\Q_2: q<x\}$ is c.e.
	\end{df}
	 Let $\le_L$ denote lexicographic order on $2^\omega$. 
	A dyadic rational may be written in the form $q=\sum_{i=1}^n a_i 2^{-i}$ where $a_n=1$, and each $a_i\in\{0,1\}$. The \emph{associated binary string} of $q$ is $s(q)=\la a_1,\ldots, a_n\ra$. (If $q=0$ then $n=0$ and the associated string is the empty string.) Conversely, the associated dyadic rational of $\sigma\in 2^{<\omega}$ is $\sum_{i=0}^{|\sigma|-1} \sigma(i) 2^{-(i+1)}$.

	\begin{lem}
	For each $x\in 2^\omega$, we have that $$\{q\in\Q_2:q<x\}\text{ is c.e.} \Iff \{\sigma\in 2^{<\omega}: \sigma^\frown 0^\omega<_L x\}\text{ is c.e.}$$
	\end{lem}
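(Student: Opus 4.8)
The plan is to reduce both conditions to a single condition on the real number $x=\sum_{i\in\omega}x(i)2^{-(i+1)}$ (I identify $x\in 2^\omega$ with its value) by passing computably between dyadic rationals and binary strings. For $\sigma\in 2^{<\omega}$ write $q_\sigma=\sum_{i=0}^{|\sigma|-1}\sigma(i)2^{-(i+1)}$ for its associated dyadic rational, and recall from the setup that $q_{s(q)}=q$ for each $q\in\Q_2$ (we may ignore $q=1$, which satisfies $q<x$ for no $x\in 2^\omega$). Both maps $\sigma\mapsto q_\sigma$ and $q\mapsto s(q)$, between $2^{<\omega}$ and a standard presentation of $\Q_2$, are computable.

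The core step I would isolate is the equivalence
$$\sigma^\frown 0^\omega<_L x\quad\Iff\quad q_\sigma<x\qquad(\sigma\in 2^{<\omega}).$$
To prove it, first record the elementary fact that $y\mapsto\sum_i y(i)2^{-(i+1)}$ is nondecreasing for $\le_L$, and is strictly increasing except that the two expansions $\tau^\frown 0^\frown 1^\omega<_L\tau^\frown 1^\frown 0^\omega$ of a dyadic rational have the same value. If $\sigma^\frown 0^\omega<_L x$, monotonicity gives $q_\sigma\le x$, and equality would force $\sigma^\frown 0^\omega$ to be the lexicographically smaller expansion $\tau^\frown 0^\frown 1^\omega$ of a dyadic rational, which is impossible as $\sigma^\frown 0^\omega$ is eventually $0$; hence $q_\sigma<x$. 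Conversely, if $q_\sigma<x$ then $x\le_L\sigma^\frown 0^\omega$ would give $x\le q_\sigma$ by monotonicity, a contradiction, so $\sigma^\frown 0^\omega<_L x$ since $\le_L$ is total.

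Granting this equivalence, both implications of the lemma follow from closure of the c.e.\ sets under computable preimages. If $\{q\in\Q_2:q<x\}$ is c.e., then
$$\{\sigma\in 2^{<\omega}:\sigma^\frown 0^\omega<_L x\}=\{\sigma\in 2^{<\omega}:q_\sigma\in\{q\in\Q_2:q<x\}\}$$
is c.e.\ as the preimage of a c.e.\ set under $\sigma\mapsto q_\sigma$. Conversely, if $\{\sigma\in 2^{<\omega}:\sigma^\frown 0^\omega<_L x\}$ is c.e., then, using $q_{s(q)}=q$,
$$\{q\in\Q_2:q<x\}=\{q\in\Q_2:s(q)^\frown 0^\omega<_L x\}$$
is c.e.\ as the preimage of a c.e.\ set under $q\mapsto s(q)$.

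I expect the only real obstacle to be the equivalence in the second paragraph: one must track carefully that the (non-injective) binary-value map is monotone in the \emph{right} direction for $\le_L$ and that its degenerate equality case cannot arise for sequences of the special form $\sigma^\frown 0^\omega$. Everything after that is a routine preimage argument.
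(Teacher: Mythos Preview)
Your proof is correct and follows essentially the same approach as the paper: both hinge on the equivalence $\sigma^\frown 0^\omega<_L x\iff q_\sigma<x$, from which the c.e.\ equivalence follows by passing computably between strings and dyadic rationals. Your version is more detailed, supplying a careful justification of the key equivalence (handling the degenerate case of dual binary expansions) and making the preimage arguments explicit, whereas the paper simply asserts these facts.
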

	\begin{proof}
	We have that $\sigma^\frown 0^\omega<_L x$ iff the associated dyadic rational of $\sigma$ is less than $x$, and $q<x$ iff the associated binary string $\sigma$ of $q$ satisfies $\sigma^\frown 0^\omega<_L x$. In fact, $\{s(q): q\in\mathbb Q_2, q<x\}=\{\sigma: \sigma^\frown\omega<_L x\}$.
	\end{proof}

	\begin{df}
	An \emph{effective numbering} of a family of left-c.e.\ reals $\mathcal R$ is an onto map $r:\omega\mapsto\mc R$ such that 
	$$\{(q,e)\in\Q_2\times\omega \mid q<r(e)\}$$
	is c.e.\ If $r$ is also injective then $r$ is called a \emph{Friedberg numbering} of $\mathcal R$.
	\end{df}

	\begin{thm}\label{piolunu}
	The family of all left-c.e.\ reals has an effective numbering.
	\end{thm}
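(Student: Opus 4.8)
The plan is to read the numbering off a universal uniformly c.e.\ family of subsets of $\Q_2$, using that a left-c.e.\ real is determined by its left cut $\{q\in\Q_2:q<x\}$. Fix a uniformly c.e.\ sequence $(V_e)_{e\in\omega}$ of subsets of $\Q_2$ in which every c.e.\ subset of $\Q_2$ occurs (compose a standard enumeration of the c.e.\ subsets of $\omega$ with a computable bijection $\omega\to\Q_2$). For each $e$ set
\[D_e=\{q\in\Q_2:(\exists p)(p\in V_e\and q<p)\}.\]
Since $\Q_2$ is dense and all its members are $\le 1$, each $D_e$ is a downward-closed subset of $\Q_2\cap[0,1)$ without a greatest element, so $D_e=\{q\in\Q_2:q<\sup D_e\}$ with $\sup D_e\in[0,1]$. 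Let $r(e)\in 2^\omega$ be the representation of $\sup D_e$ that is eventually $0$ (and $=1^\omega$ when $\sup D_e=1$); then $r(e)$ is left-c.e.\ because its left cut is $D_e$.

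Two things then have to be checked. For the c.e.\ condition,
\[\{(q,e)\in\Q_2\times\omega:q<r(e)\}=\{(q,e):(\exists p)(p\in V_e\and q<p)\},\]
and the right-hand side is c.e.\ because $(V_e)$ is uniformly c.e.\ and $<$ on $\Q_2$ is decidable. For surjectivity onto real numbers, given a left-c.e.\ $y$ the cut $\{q:q<y\}$ is c.e., hence equals some $V_e$; density gives $D_e=\{q:q<y\}$, so $\sup D_e$ is the real coded by $y$ and $r(e)$ is its eventually-$0$ representation, which is $y$ itself unless $y$ ends in $1^\omega$.

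The remaining point — and the only one needing a little care — is that a dyadic rational in $(0,1)$ has a second representation in $2^\omega$, ending in $1^\omega$, which is also a left-c.e.\ real, so the $r$ above is not quite onto the whole family. I would repair this by doubling the index set: keep $r(2e)$ as above, and let $r(2e+1)$ be the $1^\omega$-ending representation of $\sup D_e$ when $\sup D_e$ is dyadic and in $(0,1)$, and $r(2e+1)=r(2e)$ otherwise. The two representations of a dyadic rational share a left cut, so the $2e$- and $(2e+1)$-sections of $\{(q,e'):q<r(e')\}$ both equal $D_e$ and the set stays c.e.; and every $1^\omega$-ending left-c.e.\ real is now in the range.

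I do not expect a genuine obstacle; beyond the dyadic subtlety, the only thing worth flagging is that $r$ need not — and in general cannot — be a computable function $\omega\to 2^\omega$, since $\sup D_e$ may be noncomputable. This is harmless: the definition of effective numbering asks only that the single relation $\{(q,e):q<r(e)\}$ be c.e., which is exactly what the displayed identity provides. (If one reads ``the family of all left-c.e.\ reals'' as a family of real numbers, the doubling step is unnecessary and the first two paragraphs already finish the proof.)
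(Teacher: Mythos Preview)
Your argument is correct and follows the same idea as the paper: enumerate the c.e.\ subsets of $\Q_2$ and send each index to the supremum of the corresponding set, so that the defining relation $\{(q,e):q<r(e)\}$ is c.e.\ by construction. The paper's proof is terser (it takes $r(e)=\lim_s \max W_{e,s}$ and leaves the verification to the reader), while you additionally handle the ambiguity between the two binary expansions of a dyadic rational; the paper silently identifies left-c.e.\ reals with real numbers at this point, so your doubling step is extra care rather than a different method.
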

	\begin{proof}
	Let $W_{e,s}$ be the $e^\text{th}$ c.e.\ subset of $\Q_2$ as enumerated up to stage $s$. Let $r_{e,s}$ be the greatest element of $W_{e,s}$ and let $r(e)=\lim_{s\to\infty} r_{e,s}$. It is easy to check that $r$ is an effective numbering of $\mathcal R$. 
	\end{proof}

	Some notions from algorithmic randomness will be needed repeatedly below. $\Omega$ is any fixed Martin-L\"of random left-c.e.\ real with computable approximation $\Omega_s\le_L\Omega_{s+1}$, $s\in\omega$. Let $K$ denote prefix-free Kolmogorov complexity. Schnorr's Theorem states that a real $x\in 2^\omega$ is Martin-L\"of random if and only if there is a constant $c$ such that for all $n$, $K(x\restrict n)\ge n-c$.

	\begin{thm}\label{gmt}
	The family of all Martin-L\"of random left-c.e.\ reals has an effective enumeration.
	\end{thm}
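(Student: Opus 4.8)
The plan is to trade the implicit, non-c.e.\ description of the family for a c.e.\ parametrisation, using that the Martin-L\"of random left-c.e.\ reals form a single Solovay degree. Write $\le_S$ for Solovay reducibility of left-c.e.\ reals (so $\alpha\le_S\beta$ means some partial computable map sends each rational $q<\beta$ to a rational below $\alpha$ within $O(\beta-q)$), $\equiv_S$ for mutual $\le_S$, and let $\mc R$ be the family of all Martin-L\"of random left-c.e.\ reals. The key step I would isolate is the characterisation
\[
  y \text{ is Martin-L\"of random}\ \Iff\ y-q\Omega \text{ is a left-c.e.\ real for some dyadic }q\in(0,1),
\]
valid for every left-c.e.\ real $y$. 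Granting this, I would fix, via Theorem~\ref{piolunu}, an effective numbering $e\mapsto\beta_e$ of all left-c.e.\ reals, put $q_k:=2^{-(k+1)}$, and define an enumeration indexed by pairs $\langle e,k\rangle\in\omega^2$ by
\[
  r(\langle e,k\rangle)\ :=\ (1-q_k)\,\beta_e+q_k\,\Omega .
\]
Each $r(\langle e,k\rangle)$ is again a left-c.e.\ real in $[0,1]$, and since the left cuts of $\beta_e$ (uniformly in $e$) and of $\Omega$ are c.e., so is $\{(p,\langle e,k\rangle):p<r(\langle e,k\rangle)\}$; thus $r$ is an effective numbering of its range.

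Next I would verify $\operatorname{ran}(r)=\mc R$. For the inclusion $\operatorname{ran}(r)\subseteq\mc R$: since $r(\langle e,k\rangle)$ is the sum of the left-c.e.\ reals $(1-q_k)\beta_e$ and $q_k\Omega$, and a sum of left-c.e.\ reals is $\ge_S$ each of its summands (read the reduction off the two approximations), while scaling by a positive rational is a Solovay equivalence, we have $r(\langle e,k\rangle)\ge_S q_k\Omega\equiv_S\Omega$; as $\Omega$ is Martin-L\"of random and a left-c.e.\ real that is $\ge_S$ a Martin-L\"of random real is itself Martin-L\"of random (recover a prefix of $\Omega$ from a marginally longer prefix of the target via the reduction and apply Schnorr's Theorem), $r(\langle e,k\rangle)\in\mc R$. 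For the inclusion $\mc R\subseteq\operatorname{ran}(r)$: given $y\in\mc R$, the characterisation yields a dyadic $q_0\in(0,1)$ with $y-q_0\Omega$ left-c.e.; any smaller dyadic $q\le q_0$ then also makes $y-q\Omega$ left-c.e.\ (add the left-c.e.\ real $(q_0-q)\Omega$), so pick $q=q_k$ additionally small enough that $q_k(1-\Omega)\le 1-y$, which is possible because $y<1$. Then $\beta:=(y-q_k\Omega)/(1-q_k)$ is a left-c.e.\ real in $[0,1]$, and choosing $e$ with $\beta_e=\beta$ gives $r(\langle e,k\rangle)=(1-q_k)\beta+q_k\Omega=y$.

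The remaining task, and where the genuine content lies, is the characterisation. The direction ``$\Leftarrow$'' is the computation just used: if $y-q\Omega$ is left-c.e., then $y$ is the sum of the left-c.e.\ reals $y-q\Omega$ and $q\Omega$, so $\Omega\equiv_S q\Omega\le_S y$, and Solovay reducibility from a Martin-L\"of random real preserves Martin-L\"of randomness by Schnorr's Theorem. For ``$\Rightarrow$'' I would invoke the Ku\v{c}era--Slaman theorem (see \cite{Nies:book}) that every Martin-L\"of random left-c.e.\ real is Solovay complete; specialising to $\Omega$ gives $\Omega\le_S y$. The standard translation between Solovay reducibility and differences of left-c.e.\ reals then finishes the job: after replacing the Solovay constant, if necessary, by a larger integer $c$ exceeding the reciprocal of some positive rational below $y$ (so that $cy\ge\Omega$), one reads off from the reduction that $cy-\Omega$ has a c.e.\ left cut; hence $y-\tfrac1c\Omega=\tfrac1c(cy-\Omega)$ is left-c.e., and any dyadic $q\in(0,1/c]$ works. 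I expect the main obstacle to be concentrated in this single external ingredient --- Solovay completeness of the Martin-L\"of random left-c.e.\ reals --- the rest being routine, with the only delicate points the uniformity of the left-cut relation and keeping all quantities inside $[0,1]$.
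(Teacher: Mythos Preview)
Your proof is correct but takes a genuinely different route from the paper. The paper gives a direct, elementary construction: for each constant $c$ it follows the standard left-c.e.\ enumeration $r_e$ as long as every prefix currently satisfies $K_t(r_{e,t}[n])\ge n-c$, and whenever a prefix fails it ``parks'' on $r_{e,t}[n]^\frown\Omega$ until (if ever) the failure is repaired; merging over $c$ gives the enumeration. No Solovay theory is used, and in particular the Ku\v{c}era--Slaman theorem is not invoked.

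Your argument instead leverages the structure theory of left-c.e.\ reals: Ku\v{c}era--Slaman gives $\Omega\le_S y$ for every random left-c.e.\ $y$, the difference characterisation of $\le_S$ turns this into ``$y-q\Omega$ is left-c.e.\ for some dyadic $q>0$'', and then the convex combinations $(1-q_k)\beta_e+q_k\Omega$ sweep out exactly $\mc R$. This is slick and conceptually illuminating---it makes visible that $\mc R$ is a single Solovay degree---but it imports a substantial external theorem. The paper's hands-on construction, by contrast, is self-contained and has the practical advantage that its stage-by-stage approximation $m_{e,s}$ is reused verbatim in the proof of Theorem~\ref{sodexho} to build the Friedberg numbering; your parametrisation would require reworking that later argument.
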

	\begin{proof}
	Let $K_t$ a uniformly computable approximation to Kolmogorov complexity at stage $t$, satisfying $K_{t+1}\le K_t$. To obtain an enumeration of the Martin-L\"{o}f random left-c.e.\ reals, it suffices to enumerate all Martin-L\"{o}f random left-c.e.\ reals $y$ such that $K(y\restrict n)\geq n-c$ for all $n$, uniformly in $c$.  

	  Initially our $m^\text{th}$ ML-random left-c.e.\ real $m_e$ will look like $r_e=r(e)$ from Theorem \ref{piolunu}, i.e.\ $m_{e,s}=r_{e,s}$ unless otherwise stated. Let $r_{e,t}[n]$ be the associated string, restricted or appended with zeroes if necessary to obtain length $n$. If at some stage $t$, for some $n=n_t\in\omega$, 
	$$K_t(r_{e,t}[n]) <n-c,$$ 
	then let $m_{e,s}=r_{e,t}[n]^{\frown}\Omega_s$
	at all stages $s>t$ until, if ever, $K_s(r_{e,s}[n])\geq n-c$ at some stage $s>t$.  At this point, $r_{e,t}[n]< r_{e,s}[n]$, 
	 since $r_e$ is a left-c.e.\ real.  Resume where we left off in defining $m_e=r_e$, starting immediately at stage $s$ with $m_{e,s}=r_{e,s}$.  
	This process continues for the entire construction of each $m_e$.  

	This enumeration contains all left-c.e.\ reals which are Martin-L\"{o}f random with respect to the constant $c$, and only Martin-L\"{o}f left-c.e.\ random reals. Thus the merger of these enumerations over all $c$ is an enumeration of all Martin-L\"of random left-c.e. reals.
	\end{proof}

	\subsection{Kummer's method}

	Kummer \cite[1990]{Kummer} gave a priority-free proof of Friedberg's result.  The conditions set forth in the proof provide a method of obtaining Friedberg numberings.

	A \emph{c.e.\ class} is a uniformly c.e.\ collection of subsets of $\omega$ (or equivalently, of $2^{<\omega}$ or $\Q_2$).

	\begin{thm}[Kummer \cite{Kummer}]
	If a c.e.\ class can be partitioned into two disjoint c.e.\ subclasses $L_1$ and $L_2$ such that $L_1$ is injectively enumerable and contains infinitely many extensions of every finite subset of any member of $L_2$, then the class is injectively enumerable.
	\end{thm}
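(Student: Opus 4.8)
The plan is to merge the given injective enumeration of $L_1$ with a (generally repetitive) enumeration of $L_2$, using members of $L_1$ as fillers to absorb the repetitions; the density clause of the hypothesis is precisely what makes this possible without ever leaving the class. Fix a Friedberg numbering $(A_i)_i$ of $L_1$ with uniformly computable finite approximations $A_i^s$, and a uniformly c.e.\ (not necessarily injective) listing $(D_j)_j$ of $L_2$ with approximations $D_j^s$. We may assume $L_2\ne\nil$, so that applying the hypothesis to $\nil$ shows that $L_1$, and hence the whole class, is infinite. We build sets $V_0,V_1,\dots$ by a computable stage construction in which each index $n$ is, at any stage, either free or \emph{copying} a designated $A_i$ or a designated $D_j$, with $V_n^s$ the current approximation of whatever $n$ copies. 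The only reassignment ever allowed is to turn an index copying some $D_j$ into one copying a \emph{fresh} $A_i$: when this is triggered at stage $s$ we freeze $V_n$ at $F:=D_j^s$ and, over the next stages, look for an index $i$ not yet in use with $F\subseteq A_i^t$ for some $t$, after which $n$ copies $A_i$ forever. Such $i$ exists and is found effectively, since $F$ is a finite subset of $D_j\in L_2$ and by hypothesis infinitely many members of $L_1$ extend $F$. A bookkeeping step also sets any never-otherwise-touched index to copy a fresh $A_i$, so every $V_n$ converges to a member of $L_1\cup L_2$; since all ``fresh'' choices avoid a running list of indices already used, no index of $(A_i)$ is copied by two distinct $n$.

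The requirements are, for each $i$, $\mathcal R^A_i$ (``$A_i=V_n$ for some $n$''), acted on once: do nothing if $A_i$'s index is already in use, else open a fresh index copying $A_i$ and mark that index used; and for each $j$, $\mathcal R^B_j$ (``$D_j=V_n$ for some $n$''): whenever $\mathcal R^B_j$ owns no index it \emph{activates} by grabbing a fresh index and copying $D_j$, and it \emph{deactivates} --- converting that index to a fresh $A$ as above --- exactly when a trigger $q(j,s)$ fires. The trigger must satisfy: $q(j,\cdot)$ fires infinitely often iff $D_j=D_{j'}$ for some $j'<j$. Realize it by a counting device: for each $j'<j$ keep a counter $k_{j,j'}$, initially $0$, and whenever $D_j^s$ and $D_{j'}^s$ agree on all numbers below $k_{j,j'}$, let $q(j,s)$ fire and increment $k_{j,j'}$. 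One checks that $k_{j,j'}\to\infty$ iff $D_j=D_{j'}$: if they are equal the approximations eventually agree below any fixed bound, so no counter stabilizes; if they differ at some $x$, then once $x$ has entered one of them the approximations never again agree below $x+1$, so $k_{j,j'}$ is stuck.

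To verify: each $V_n^s$ is monotone in $s$ and computable, so $(V_n)$ is a c.e.\ class. Coverage of $L_1$: if $\mathcal R^A_i$ opens an index, that index converges to $A_i$; if it does nothing, some index is already --- hence remains --- copying $A_i$. Coverage of $L_2$: if $D_j$ is not a duplicate then $q(j,\cdot)$ fires only finitely often, so $\mathcal R^B_j$ activates only finitely often and its final index converges to $D_j$; if $D_j$ is a duplicate, the least $j^\ast$ with $D_{j^\ast}=D_j$ is not a duplicate, so $\mathcal R^B_{j^\ast}$'s final index converges to $D_{j^\ast}=D_j$. Injectivity: every index converges either to some $A_i$ --- whose $(A_i)$-index is used by that $n$ alone, so distinct such indices give distinct sets --- or to some $D_j$ with $\mathcal R^B_j$ permanently active, whence $D_j$ is not a duplicate; for $j'<j$ with both permanently active, $D_{j'}\ne D_j$, and an $A$-index and a $D$-index give distinct sets since $L_1\cap L_2=\nil$. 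Hence $(V_n)$ is an injective effective numbering of $L_1\cup L_2$.

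I expect the real obstacle to be the design of the deactivation trigger. There is no c.e.\ event certifying that $D_j$ is a genuinely new member of $L_2$ --- that statement is the complement of a $\Pi^0_2$ condition --- so one cannot simply ``commit'' an index to $D_j$ once and for all; the counting device above instead arranges that any index assigned to $D_j$ is surrendered infinitely often exactly when $D_j$ is redundant, and the density hypothesis is what makes every surrender harmless, the abandoned finite content always being completable to a fresh member of $L_1$ inside the class.
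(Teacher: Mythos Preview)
The paper does not prove this theorem; it is stated with a citation to Kummer and no argument is given, so there is no in-paper proof to compare against. Your construction is essentially Kummer's own: enumerate $L_2$ tentatively, detect apparent repetitions by a $\Pi^0_2$-style agreement counter, and whenever an index looks redundant dump its current finite content into a fresh member of $L_1$, which the density hypothesis makes available.

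The argument is correct in outline. Two points deserve a word in a full write-up. First, your trigger analysis is right, but note that before the witness $x$ of $D_j\ne D_{j'}$ enters either set the counter $k_{j,j'}$ may well pass $x+1$; what matters is only that once $x$ has entered one side, agreement below any $k>x$ fails permanently, so the counter is eventually frozen --- you say this, just be sure the reader sees why the earlier overshoot is harmless. Second, the search for a fresh $A_i\supseteq F$ must be dovetailed with the rest of the construction so that concurrent reassignments (each consuming an $A$-index) cannot starve it; since at every stage only finitely many $A$-indices are marked used while infinitely many $A_i$ extend $F$, the search terminates, but this should be said explicitly. With those details in place, your verification of monotonicity, surjectivity onto $L_1\cup L_2$, and injectivity goes through as written.
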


	\begin{thm}\label{left.ce.reals}
	There is a Friedberg numbering of the left-c.e.\ reals.
	\end{thm}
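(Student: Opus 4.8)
The plan is to recast the statement in terms of c.e.\ classes and then invoke the partition method (Kummer's theorem) stated above. Identify a left-c.e.\ real $x$ with its left cut $L_x=\{q\in\Q_2:q<x\}$. By Theorem~\ref{piolunu} the family $C=\{L_x:x\text{ left-c.e.}\}$ is a c.e.\ class, the map $x\mapsto L_x$ is a bijection onto $C$, and — unwinding the definition of an effective/Friedberg numbering of a family of left-c.e.\ reals — a Friedberg numbering of all left-c.e.\ reals is precisely an injective enumeration of $C$ as a c.e.\ class. So it suffices to prove that $C$ is injectively enumerable.

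For the partition $C=L_1\cup L_2$ I would take $L_1$ to be the family of cuts $L_q$ with $q$ a dyadic rational (equivalently, with $q$ any rational in $[0,1]$). These sets are uniformly computable and pairwise distinct, so $L_1$ is an injectively enumerable c.e.\ class; and any finite $F\subseteq\Q_2$ that is contained in some member of $C$ has $\max F<1$, so there are infinitely many dyadic $q$ with $\max F<q\le 1$, hence infinitely many members $L_q$ of $L_1$ extending $F$. Thus $L_1$ satisfies the two hypotheses of Kummer's theorem, with $L_2:=C\setminus L_1$ the family of left cuts of the left-c.e.\ reals that are not dyadic rationals.

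The crux — and the step I expect to be the main obstacle — is showing that $L_2$ is itself a c.e.\ class, i.e.\ that the non-dyadic left-c.e.\ reals admit an effective numbering. The idea is to transform the universal enumeration $(r_e)$ of Theorem~\ref{piolunu}, using the random real $\Omega$, into a uniformly left-c.e.\ sequence $(y_e)$: each $y_e$ is given by an increasing dyadic approximation that imitates $r_{e,s}$ but permanently keeps a scaled copy of $\Omega_s$ grafted onto its right-hand tail, the graft being relocated to a finer scale only at those stages where $r_{e,s}$ has just moved by enough to cover the relocation without the approximation decreasing. One then wants: (i) $y_{e,s}\le_L y_{e,s+1}$ for all $s$; (ii) $y_e$ is non-dyadic for every $e$, because the grafted copy of $\Omega$ contributes an irrational tail unless it is pushed out to infinity, which should happen only when $r_e$ is genuinely non-dyadic, in which case $y_e=r_e$; and (iii) every non-dyadic left-c.e.\ real occurs as some $y_e$ — namely $y_e$ whenever $r_e$ equals it. Reconciling (ii) with (iii) is the delicate point, since the scale at which $\Omega$ is grafted must be coordinated with the uncontrolled sizes of the moves of $r_e$; this bookkeeping — and, if the naive dyadic/non-dyadic split resists it, replacing $L_1$ by a more robust dense injectively enumerable subclass of $C$ for which the complement $L_2$ is more visibly a c.e.\ class — is where the real work lies.

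With the partition $C=L_1\cup L_2$ and the density hypothesis in place, Kummer's theorem yields an injective enumeration of $C$, which by the first paragraph is a Friedberg numbering of the left-c.e.\ reals.
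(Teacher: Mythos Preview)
Your overall strategy matches the paper's: represent left-c.e.\ reals by their c.e.\ cuts and apply Kummer's theorem to a suitable partition $L_1\cup L_2$. The difference is entirely in the choice of $L_1$, and this is where your argument stalls while the paper's goes through cleanly.

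You take $L_1=\{L_q:q\in\Q_2\}$, so that $L_2$ consists of the cuts of \emph{non-dyadic} left-c.e.\ reals. As you yourself flag, proving this $L_2$ is a c.e.\ class is the whole difficulty, and your $\Omega$-grafting sketch does not resolve it: the bookkeeping you allude to (relocating the graft only when $r_{e,s}$ jumps by enough) need not push the graft to infinity even when $r_e$ is non-dyadic, since the increments $r_{e,s+1}-r_{e,s}$ can be arbitrarily small relative to the current graft scale. So condition (iii) of your plan is not secured. (Incidentally, randomness is irrelevant here; any computable irrational would serve just as well as $\Omega$ for the role you assign it.)

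The paper sidesteps this by choosing a different $L_1$: the cuts $C(x)$ of reals $x$ whose binary expansion contains an \emph{odd finite} number of $1$s. Then $L_2$ consists of reals with an even or infinite number of $1$s, and this class is enumerated by a one-line parity trick: replace each approximation $r_{e,s}$ by the longest prefix of its binary string having an even number of $1$s. The point is that $L_2$ now contains a dense computable ``buffer'' (the even-finite-$1$s reals) into which any approximation can be rounded, whereas your $L_2$ has no such buffer near the dyadic rationals. Your closing remark about replacing $L_1$ by ``a more robust dense injectively enumerable subclass'' is exactly right, and the odd-parity class is that replacement.
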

	\begin{proof}
	Let $C(x)=\{\tau: \tau^\frown 0^\omega <_L x \}$.
	Let 
	$$\mathcal L=\{C(x): x\text{ is left-c.e.}\},$$
	$$L_1=\{C(x): x(n)=1\text{ for an odd finite number of }n\},$$
	and $L_2=\mathcal L\setminus L_1$.
	It is clear that $L_1$ is injectively enumerable, and each finite subset $F$ of a member of $L_2$ is contained in infinitely many members of $L_1$. The non-trivial part is to see that $L_2$ is c.e. Briefly, the idea is that we modify an enumeration $\{r_e\}_{e\in\omega}$ of all left-c.e.\ reals to only allow 1s to be added and removed in pairs of two. That is, we let $r^*_{e,s}$ be the longest prefix $\sigma$ of the string associated with $r_{e,s}$ such that the number of 1s in $\sigma$ is even. If in the end there are infinitely many 1s in $r_e$ then $r^*_e=r_e$, and it is clear that $r^*_{e,s}\le r^*_{e,s+1}$.
	\end{proof}

	\begin{thm}\label{sodexho}
	There is a Friedberg numbering of the Martin-L\"of random left-c.e.\ reals.
	\end{thm}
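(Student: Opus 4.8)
The plan is to derive the numbering from Kummer's theorem, applied to the class
\[
\mathcal L_{ML}=\{C(x)\mid x\text{ is a Martin-L\"of random left-c.e.\ real}\},
\]
which is a c.e.\ class because the enumeration produced in Theorem \ref{gmt} makes the relation ``$\tau\in C(x)$'' c.e.\ uniformly in the index. So it suffices to split $\mathcal L_{ML}=L_1\sqcup L_2$ into disjoint c.e.\ subclasses with $L_1$ injectively enumerable and such that, for every finite subset $F$ of any member of $L_2$, infinitely many members of $L_1$ contain $F$.

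For $L_1$ I would take the ``tail-$\Omega$'' family $L_1=\{C(\rho^\frown\Omega)\mid\rho\in 2^{<\omega}\}$. Each $\rho^\frown\Omega$ is Martin-L\"of random (a finite-information change of $\Omega$) and left-c.e., uniformly in $\rho$, and distinct $\rho$'s yield distinct reals since $\Omega$, being random, is not eventually periodic; hence $L_1$ is injectively enumerable. It is dense below $L_2$ in Kummer's sense: a finite $F\subseteq C(x)$ is contained in $C(y)$ exactly when $y$ exceeds the dyadic rational $q=\max\{\tau^\frown 0^\omega\mid\tau\in F\}$, and the reals $s(q)^\frown 0^{\,j}1^\frown\Omega$ ($j\in\omega$) are infinitely many members of $L_1$ exceeding $q$.

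It remains to see that $L_2=\mathcal L_{ML}\setminus L_1$ — the Martin-L\"of random left-c.e.\ reals no tail of which equals $\Omega$ — is a c.e.\ class, and this is the heart of the matter. Fix an auxiliary Martin-L\"of random left-c.e.\ real $\Omega'$ such that no tail of $\Omega'$ is a tail of $\Omega$ and vice versa (for instance $\Omega'=\Omega+2^{-2}+2^{-5}$ when this lies in $[0,1)$; the required non-coincidences reduce by elementary arithmetic to the irrationality of $\Omega$), so that every real of the form $\sigma^\frown\Omega'$ lies in $L_2$. I would then re-run the construction of Theorem \ref{gmt} over a Friedberg enumeration $\{\ell_e\}$ of the left-c.e.\ reals (Theorem \ref{left.ce.reals}), with two changes: the ``$\Omega$-patch'' triggered by a drop of Kolmogorov complexity below $n-c$ is performed with $\Omega'$ in place of $\Omega$; and a further monotone ``escape'' rule redirects the approximation onto $(\ell_{e,s}\restrict j)^\frown\Omega'$ whenever the current approximation has agreed with $(\ell_{e,s}\restrict j)^\frown\Omega_s$ on an initial block whose length grows with the stage. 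Any real so produced lies in $L_2$ — it is either a patch $\sigma^\frown\Omega'$, or a backbone real $\ell_e$ for which every escape trigger has gone dormant, witnessing that $\ell_e$ has no $\Omega$-tail — and conversely each member of $L_2$ is produced, being some $\ell_e$ which is returned unchanged once the randomness constant is large enough and the triggers are dormant. Kummer's theorem then yields the Friedberg numbering.

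The step I expect to be the main obstacle is precisely this complementation, i.e.\ calibrating the escape rule: the block-length threshold must be lax enough that the redirection fires and settles whenever $\ell_e$ genuinely has an $\Omega$-tail, yet, for $\ell_e\in L_2$, it must tolerate the arbitrarily long \emph{spurious} partial agreements between $\ell_e$ and $\Omega$ that occur at ever larger shifts, so that $\ell_e$ itself is eventually recovered; making these two demands compatible is the delicate point. (An alternative choice of $L_1$, the dense injectively enumerable family $\{\,\Omega+q(1-\Omega)\mid q\in\mathbb Q_2\cap(0,1)\,\}$, with $L_2$ the random left-c.e.\ reals off this lattice, presents the same complementation difficulty; and while $x\mapsto\min(x,\Omega)$ readily shows $\{C(x)\mid x\in\mathcal L_{ML},\ x\le\Omega\}$ to be a c.e.\ class, choosing $L_2$ to be this class forces $L_1=\{x>\Omega\}$, whose injective enumerability is again a problem of the same flavour.)
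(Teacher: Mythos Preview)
Your framework is the paper's: apply Kummer's theorem to $\mathcal L_{ML}=\{C(x):x\text{ ML-random left-c.e.}\}$. The divergence is in the choice of $L_1$, and that choice is where your argument stalls.

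You take $L_1=\{C(\rho^\frown\Omega):\rho\in 2^{<\omega}\}$, a family that is \emph{dense} in $(2^\omega,<_L)$. Consequently $L_2$ is the set of random left-c.e.\ reals with no $\Omega$-tail, a $\Pi^0_2$-flavoured condition whose c.e.\ enumerability you do not establish. Your ``escape rule'' sketch has two concrete problems. First, monotonicity: if at stage $s$ you redirect to $(\ell_{e,s}\restrict j)^\frown\Omega'$ and later wish to resume following $\ell_e$, you need $\ell_{e,s'}\ge_L(\ell_{e,s}\restrict j)^\frown\Omega'_{s'}$, which need not hold once $\ell_e\restrict j$ has settled. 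Second, the tension you yourself flag is real: for $\ell_e\in L_2$ the agreement length $a_j$ between $\ell_e$ and $(\ell_e\restrict j)^\frown\Omega$ is finite for each $j$ but unbounded in $j$, so any stage-dependent threshold that eventually dominates $a_j$ for fixed $j$ will still be tripped by larger $j$'s infinitely often, and you give no mechanism that guarantees eventual quiescence. You correctly identify this as ``the delicate point'' but do not resolve it; as written the proof is incomplete.

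The paper sidesteps the whole difficulty by taking a \emph{sparse, linearly $<_L$-ordered} $L_1$, namely $L_1=\{C(1^{n\frown}\Omega):n\in\omega\}$. Then
\[
L_2=\bigcup_{n}\{C(y)\in\mathcal R:\ 1^{n\frown}\Omega<_L y<_L 1^{n+1\frown}\Omega\},
\]
so membership in $L_2$ is purely an \emph{order} condition, and the task reduces to showing that $\{C(y)\in\mathcal R:y<_L 1^{n\frown}\Omega\}$ and $\{C(y)\in\mathcal R:y>_L 1^{n\frown}\Omega\}$ are uniformly c.e. This is done by clamping the enumeration $\{m_e\}$ from Theorem~\ref{gmt}: follow $m_e$ while $m_{e,s}\restrict k<_L\Omega_s\restrict k$, and otherwise freeze on a suitable $\sigma^\frown\Omega$ with $\sigma<_L\Omega\restrict k$. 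You actually brush against this idea in your final parenthetical (the map $x\mapsto\min(x,\Omega)$), but discard it because with your large $L_1$ the residual class is still unmanageable. The missing insight is that one can choose $L_1$ small enough to be trivially injectively enumerable yet cofinal enough for Kummer's density hypothesis, and \emph{linearly ordered} so that its complement is a union of order-intervals.
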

	\begin{proof}
	Let 
	$$\mathcal R=\{C(x): x \text{ is ML-random and left-c.e.}\},$$
	$L_1=\{C(1^{n\frown}\Omega):n\in\omega\}$, and $L_2=\mathcal{R}\setminus L_1$.  
	Again, it is clear that $L_1$ is injectively enumerable and each finite subset of a member of $L_2$ can be extended to infinitely many members of $L_1$. We will argue that $L_2$ is c.e. Note that $1^{n\frown}\Omega<_L 1^{n+1\frown}\Omega$ for each $n$. Thus
	$$L_2=\bigcup_{n\in\omega}\left\{C(y)\in\mc R:  1^{n\frown}\Omega<_L y<_L 1^{n+1\frown}\Omega\right\}.$$ 
	so it suffices to show that the sets 
	\begin{equation}
	\tag{1} \{C(y)\in\mc R: y<_{L}1^{n\frown}\Omega\} \text{,}
	\end{equation}
	\begin{equation}
	\tag{2} 
	\{C(y)\in\mc R: 1^{n\frown}\Omega <_L y \}
	\end{equation}
	are uniformly c.e.

	Notice that $y<_L 1^{n\frown}\Omega$ iff there is some $k$ such that $y\restrict k <_L (1^{n\frown}\Omega)\restrict k$, so for (1) it suffices to show that $\{C(y)\in\mc R: y\restrict k <_L (1^{n\frown}\Omega)\restrict k\}$ is c.e., uniformly in $n$ and $k$. This is non-trivial only if $k>n$, and in fact it suffices to show that a suitable subfamily $\mc F_k$ of $\left\{C(y)\in\mc R: y<_{L}\Omega\right\}$ containing
	\begin{equation}
	\tag{$1'$} \{C(y)\in\mc R: y\restrict k<_L \Omega\restrict k\}
	\end{equation} 
	is uniformly c.e.\ for $k\in\omega$. 

	We modify the enumeration $\{m_e\}_{e\in\omega}$ of the left-c.e.\ random reals from Theorem \ref{gmt}, producing a new enumeration $\{\widehat m_e\}_{e\in\omega}$.  Initially, as long as $\Omega\restrict k$ looks like the constant-zero string $0^k$ then $\widehat m_e$ is made to look like $0^\frown \Omega$. Note that since $\Omega\ne 0^\omega$, $0^\frown \Omega<_L \Omega$. 

	If at any stage it looks like $\Omega\restrict k\ne 0^k$ then thereafter we let $\widehat m_e=m_e$ as long as $m_e\restrict k<_L\Omega\restrict k$. If at some stage $s$, $m_{e,s}\restrict k\ge_L \Omega_s\restrict k$,  then we say that we are in an undesirable state, and we let $\widehat m_{e,t}=m_{e,s-1}\restrict k^\frown \Omega_t$ for all $t\ge s$ until a possible later stage where we are in a desirable state again. 

	Thus, if $m_e$ really satisfies $m_e\restrict k<_L\Omega\restrict k$ then we will have $\widehat m_e=m_e$, and if not then $\widehat m_e$ will be a finite string $\sigma<_L\Omega\restrict k$ followed by $\Omega$, so in any case it will be a Martin-L\"of random real. Thus $\{C(\widehat m_e)\}_{e\in\omega}$ is an effective enumeration of a family $\mc F_k$ as stated. The argument for (2) is analogous. 
	\end{proof}

	\subsection{Specifying randomness constants}

	Recall that Schnorr's Theorem states that a real $x\in 2^\omega$ is Martin-L\"of random if and only if there is a constant $c$ such that for all $n$, $K(x\restrict n)\ge n-c$. The optimal randomness constant of $x$ is the least $c$ such that this holds. For each interval $I\subseteq\omega$ we let $\mc A_I$ ($\mc R_I$) denote the set of all Martin-L\"of random (and left-c.e., respectively) reals whose optimal randomness constant belongs to $I$.
	Let $\mu$ denote the fair-coin Cantor-Lebesgue measure on $2^\omega$. 
	By the proof of Schnorr's Theorem we have
	$$\mu(\{x:(\forall n)K(x\restrict n)\geq n-c\})\geq 1-2^{-(c+1)}.$$
	Consequently, if $c\geq 0$, then  $\mu \mc A_{[0,c]}>0$ and $\mc A_{[0,c]}\ne\nil$.

	\begin{thm}\label{ten}
	Let $c\ge 0$. There is no effective enumeration of $\mathcal R_{[0,c]}$.
	\end{thm}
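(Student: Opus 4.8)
The plan is to suppose that $r\colon\omega\to\mc R_{[0,c]}$ is an effective enumeration, to fix (as in Theorem~\ref{piolunu}) a uniformly computable array of dyadic rationals with $r_{e,s}\le r_{e,s+1}$ and $\lim_s r_{e,s}=r(e)$, and to derive a contradiction by building a left-c.e.\ real $y$ with two properties: (1)~$y$ lies in the $\Pi^0_1$ class $\mc A_{[0,c]}=\{x:(\forall n)\,K(x\restrict n)\ge n-c\}=2^\omega\setminus[U_c]^\preceq$, where $U_c=\{\sigma:K(\sigma)<|\sigma|-c\}$ is c.e., so that $y\in\mc R_{[0,c]}$ (a left-c.e.\ real lies in $\mc R_{[0,c]}$ exactly when it belongs to $\mc A_{[0,c]}$); and (2)~$y\ne r(e)$ for every $e$. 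Since $r$ is onto $\mc R_{[0,c]}$, property (1) forces $y=r(e)$ for some $e$, contradicting (2).

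For (1) the key resource is that $\mu(\mc A_{[0,c]})\ge 1-2^{-(c+1)}$, so $\mc A_{[0,c]}$ is a $\Pi^0_1$ class of measure close to $1$. Writing $\mc A_{[0,c],s}=2^\omega\setminus[U_{c,s}]^\preceq$ for the clopen stage-$s$ approximation, I would build $y$ as the leftmost path of a nested sequence of closed sets $Q_0\supseteq Q_1\supseteq\cdots$ with $Q_s\subseteq\mc A_{[0,c],s}$, each obtained from $\mc A_{[0,c],s}$ by deleting finitely many ``dodging'' cones; then $y\in\bigcap_s\mc A_{[0,c],s}=\mc A_{[0,c]}$, and $y$ is left-c.e.\ because the leftmost path of a shrinking co-c.e.\ sequence of nonempty closed sets is nondecreasing along the approximation. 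The dodging cones are handed out to the requirements $R_e$ below under a priority arrangement that keeps the total measure ever deleted below $1-2^{-(c+1)}\le\mu(\mc A_{[0,c]})$, so that $Q:=\bigcap_sQ_s$ is nonempty and $y$ is well defined.

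To meet $R_e$ we must separate $y$ from $r(e)$, and the technically demanding heart of the proof --- the step I expect to be the main obstacle --- is that both reals are approximated only from below, so $r(e)$ can repeatedly ``chase'' $y$ to the right. The basic move for $R_e$ is to become \emph{permanently} satisfied by committing $y$ to a subcone $[\rho]$ of the region left available by higher priorities, with $[\rho]\cap\mc A_{[0,c]}\ne\nil$ and $[\rho]$ lying entirely to the left of the current $r_{e,s}$, for then $y\le\rho^\frown 1^\omega<r_{e,s}\le r(e)$; the obstruction is that $r(e)$ may turn out to be the leftmost point of the relevant region, leaving no room to dodge to the left. This is exactly where the quantitative bound $\mu(\mc A_{[0,c]})\ge 1-2^{-(c+1)}$ is used: it forces $\mc A_{[0,c]}$ to remain ``fat'', which lets one keep each requirement confined to a cone still containing a live subcone strictly below $y$'s designated one, thereby forcing $y$ strictly above that leftmost point --- hence above $r(e)$ in the dangerous case --- while in the complementary case $r(e)$ is eventually trapped inside a deleted dodging cone, so $r(e)\notin Q\ni y$. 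A (finite- or bounded-injury) verification that the strategies coexist, that the deleted measure stays below $\mu(\mc A_{[0,c]})$, and that the resulting leftmost path is genuinely left-c.e., then yields $y\in\mc R_{[0,c]}\setminus\operatorname{range}(r)$, the desired contradiction.
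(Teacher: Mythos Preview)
Your approach is genuinely different from the paper's, and the paper's argument is far shorter: assuming an enumeration $\{\alpha_e\}_{e\in\omega}$ of $\mc R_{[0,c]}$ with monotone approximations $\alpha_{e,s}$, set $\beta_s=\max\{\alpha_{e,s}:e\le s\}$ and $\beta=\lim_s\beta_s$. Then $\beta$ is left-c.e., and since left-c.e.\ reals are dense in the $\Pi^0_1$ class $\mc A_{[0,c]}$ (the leftmost path of $\mc A_{[0,c]}\cap[\sigma]$ is left-c.e.\ for every live $\sigma$), $\beta$ is the rightmost path of $\mc A_{[0,c]}$. But the rightmost path of a $\Pi^0_1$ class is automatically right-c.e., so $\beta$ is computable --- contradicting $\beta\in\mc A_{[0,c]}$. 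No priority, no measure accounting, no dodging.

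Your proposal, by contrast, has a real gap. The only feature of $\mc A_{[0,c]}$ you invoke is that it is a $\Pi^0_1$ class of large measure, and you say the ``fatness'' bound $\mu(\mc A_{[0,c]})\ge 1-2^{-(c+1)}$ is what lets the dodging succeed. But this cannot be the whole story: run your construction verbatim with $\mc A_{[0,c]}$ replaced by the $\Pi^0_1$ class $2^\omega$ (measure~$1$, as fat as possible). The target family becomes the family of \emph{all} left-c.e.\ reals, which by Theorem~\ref{piolunu} \emph{does} have an effective enumeration; hence no diagonalizing $y$ can exist there, and your construction must fail in that setting. Since nothing in your outline distinguishes $\mc A_{[0,c]}$ from $2^\omega$, the outline cannot be correct as it stands.

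Concretely, the failure is in the ``dangerous case''. Your $y$ is the leftmost point of $Q=\mc A_{[0,c]}\setminus D$, and this $y$ is itself a left-c.e.\ member of $\mc A_{[0,c]}$, hence equals some $r(e^\ast)$. For $R_{e^\ast}$ you can never ``commit to a cone strictly left of $r_{e^\ast,s}$'' (that would force $y<y$), so you must fall into the alternative strategy; but ``keeping a live subcone strictly below $y$'' does not separate $y$ from $r(e^\ast)$ when $r(e^\ast)=y$, and deleting a cone around the current $r_{e^\ast,s}$ just moves both $y$ and $r(e^\ast)$ rightward together. The chasing problem you flagged is genuine and is not resolved by a measure budget; resolving it requires using that $\mc A_{[0,c]}$ has no computable members --- precisely the fact the paper exploits directly via the rightmost path.
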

	\begin{proof}
	Suppose that $\{\alpha_e\}_{e\in\omega}$ is such an enumeration, with a uniformly computable approximation $\alpha_{e,s}$ such that $\alpha_e=\lim_{s\to\infty}\alpha_{e,s}$ and $\alpha_{e,s}\le\alpha_{e,s+1}$. Note that
	\[\mc A_{[0,c]}=\{x:(\forall n)K(x\restrict n)\geq n-c\}\]
	is a $\Pi^0_1$ class. Let $\beta_{s}=\max\{\alpha_{e,s}: e\le s\}$. Then $\beta=\lim_{s\to\infty}\beta_s$ is left-c.e., and since the left-c.e.\ members of $A_{[0,c]}$ are dense in $A_{[0,c]}$, $\beta$ is the rightmost path of $A_{[0,c]}$. However the rightmost path of a $\Pi^0_1$ class is also \emph{right-c.e.}, defined in the obvious way. Thus $\beta$ is a Martin-L\"of random real that is computable, a contradiction.
	\end{proof}

	\begin{thm}\label{nine}
	For each $c$ there is an effective numbering of $\mc R_{[c+1,\infty)}$. 
	\end{thm}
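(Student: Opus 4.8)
The plan is to thin the effective enumeration $\{m_e\}_{e\in\omega}$ of all Martin-L\"of random left-c.e.\ reals produced in Theorem~\ref{gmt}, retaining precisely those members that lie in $\mc R_{[c+1,\infty)}$. By Schnorr's Theorem, a left-c.e.\ Martin-L\"of random real $x$ lies in $\mc R_{[c+1,\infty)}$ exactly when $x\restrict n\in G$ for some $n$, where $G:=\{\sigma\in 2^{<\omega}\colon K(\sigma)<|\sigma|-c\}$ is c.e.; fix a computable approximation $G_s$. I shall also use that, for a given string $\tau$, one may compute --- by dovetailing over pairs $(m,s)$ with $m\ge 1$ and $K_s(\tau^{\frown}0^{m})<|\tau|+m-c$ --- a number $k(\tau)\ge 1$ with $K(\tau^{\frown}0^{k(\tau)})<|\tau|+k(\tau)-c$; such an $m$ exists because $K(\tau^{\frown}0^{m})\le K(\tau)+K(m)+O(1)$ while $K(m)=o(m)$. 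Consequently, for every $\tau$, the real $\tau^{\frown}0^{k(\tau)}{}^{\frown}\Omega$ is a left-c.e.\ Martin-L\"of random real failing the level-$c$ test at length $|\tau|+k(\tau)$, hence lies in $\mc R_{[c+1,\infty)}$; in particular $\mc R_{[c+1,\infty)}\ne\nil$.

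The enumeration is indexed by pairs $\langle e,\mu\rangle$, $\mu\in 2^{<\omega}$. With such a pair associate the ``safe'' real $D=\mu^{\frown}0^{k(\mu)}{}^{\frown}\Omega$, with computable approximation $D_s=\mu^{\frown}0^{k(\mu)}{}^{\frown}\Omega_s$, and define $r_{\langle e,\mu\rangle}$ by an approximation running in three one-way phases. Phase $W$: set $r_{\langle e,\mu\rangle,s}=D_s$, and pass to phase $T$ at the first stage $s^{*}$ (if any) at which $m_{e,s^{*}}\in[\mu]$, $m_{e,s^{*}}\ge_{L}D_{s^{*}}$, and some prefix of $\mu$ has already appeared in $G_{s^{*}}$. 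Phase $T$: set $r_{\langle e,\mu\rangle,s}=m_{e,s}$ for as long as $m_{e,s}\in[\mu]$; if at some stage $s'$ we have $m_{e,s'}\notin[\mu]$ --- necessarily with $m_{e,s'}$ lying $\le_{L}$-above every member of $[\mu]$, since $m_{e,s}$ is $\le_L$-nondecreasing and was in $[\mu]$ --- record the string $\rho$ with $\rho^{\frown}0^{\omega}=m_{e,s'-1}$ and pass to phase $F$. Phase $F$: for all $t\ge s'$, set $r_{\langle e,\mu\rangle,t}=\rho^{\frown}0^{k(\rho)}{}^{\frown}\Omega_t$. The sequence $r_{\langle e,\mu\rangle,s}$ is $\le_L$-nondecreasing --- within each phase this is immediate, at the $W\to T$ step because $m_{e,s^{*}}\ge_{L}D_{s^{*}}\ge_{L}D_{s^{*}-1}$, and at the $T\to F$ step because $\rho^{\frown}0^{k(\rho)}{}^{\frown}\Omega_{s'}\ge_{L}\rho^{\frown}0^{\omega}=m_{e,s'-1}$ --- so $r_{\langle e,\mu\rangle}:=\lim_{s}r_{\langle e,\mu\rangle,s}$ is a left-c.e.\ real and ``$q<r_{\langle e,\mu\rangle}$'' is c.e.\ uniformly in $\langle e,\mu\rangle$.

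It then remains to verify that $r$ maps into, and onto, $\mc R_{[c+1,\infty)}$. For ``into'', consider the three possible terminal behaviours: if phase $W$ never ends, $r_{\langle e,\mu\rangle}=D\in\mc R_{[c+1,\infty)}$; if phase $T$ is entered and never left, $r_{\langle e,\mu\rangle}=m_e$, and here $m_e\in[\mu]$ while the entry test placed a prefix of $\mu$ --- hence of $m_e$ --- into $G$, so $m_e\in\mc R_{[c+1,\infty)}$; and if phase $F$ is reached, $r_{\langle e,\mu\rangle}=\rho^{\frown}0^{k(\rho)}{}^{\frown}\Omega\in\mc R_{[c+1,\infty)}$. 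For ``onto'', given $x\in\mc R_{[c+1,\infty)}$, pick $e$ with $m_e=x$, an $n$ with $x\restrict n\in G$, and $\ell\ge n$ with $x(\ell)=1$ (such $\ell$ exists because a Martin-L\"of random real has infinitely many $1$'s), and put $\mu=x\restrict\ell$. Then $x\restrict n$ is a prefix of $\mu$, so eventually $m_{e,s}\in[\mu]$ and a prefix of $\mu$ lies in $G_s$; moreover $x$ agrees with $D$ on $\mu$ and then carries a $1$ where $D$ (since $k(\mu)\ge 1$) carries a $0$, so $x>_{L}D$ and hence $m_{e,s}\ge_{L}D_s$ for all large $s$. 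Thus phase $T$ is entered, and as $x\in[\mu]$ it is never left, giving $r_{\langle e,\mu\rangle}=\lim_{s}m_{e,s}=x$.

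I expect the real difficulty to be precisely what phases $W$ and $F$ are designed to overcome: since the approximation to a left-c.e.\ real may only move $\le_L$-upward, one cannot just ``begin copying $m_e$'' and later retreat to a safe real should the guess fail. Parking the approximation at the fixed safe real $D$ throughout phase $W$ --- positioned strictly below the intended limit $m_e$ exactly by encoding the bit $x(|\mu|)=1$ into the index --- is what makes the upward jump into phase $T$ admissible, and freezing onto $\rho^{\frown}0^{k(\rho)}{}^{\frown}\Omega$ in phase $F$ deals with the case where the guess on $\mu$ fails only after we have committed to copying $m_e$.
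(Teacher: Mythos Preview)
Your proof is correct. It shares the essential technical idea with the paper's sketch --- namely, using ``safe'' reals of the form $\tau^{\frown}0^{p}{}^{\frown}\Omega$ with $p$ chosen large enough that $\tau^{\frown}0^{p}$ fails randomness constant $c$ --- but the organization differs. The paper begins from an enumeration $\{m_e\}$ in which each real appears infinitely often, dynamically assigns a fresh index $\alpha_d$ to $m_e$ the moment some $m_{e,t_0}[n]$ is seen to fail constant $c$, and only later ``regrets'' by switching $\alpha_d$ to a safe real if the length-$n$ prefix of $m_e$ subsequently changes; the infinitely-many-copies hypothesis is what guarantees that some copy is caught at a stage late enough for the witness to be permanent. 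You instead index by pairs $\langle e,\mu\rangle$, park on a safe real $D$ until the three entry conditions are simultaneously met, and then track $m_e$; the guess $\mu$ (with the bit $x(|\mu|)=1$) plays the role that the infinitely-many-copies device plays in the paper, and your phase $F$ corresponds to the paper's regret step. Your three-phase machine is more explicit about $\le_L$-monotonicity at the transitions --- a point the paper's sketch leaves to the reader --- and avoids the need for dynamic index allocation, at the cost of a somewhat larger index set. Either packaging yields the same result with the same underlying mechanism.
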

	\emph{Proof sketch.} 
	Let $\{m_e\}_{e\in\omega}$ be an effective enumeration of all left-c.e.\ random reals, with the additional property that for each $e$ there are infinitely many $e'$ such that for all $s$, $m_{e,s}=m_{e',s}$. We will define an effective numbering $\{\alpha_e\}_{e\in\omega}$ of $\mc R_{[c+1,\infty)}$. 

	We say that a string $\sigma$ \emph{satisfies randomness constant $c$ at stage $t$} if
	$$K_t(\sigma)\ge |\sigma|-c;$$
	otherwise, we say that $\sigma$ fails randomness constant $c$ at stage $t$.

	We proceed in stages $t\in\omega$, monitoring each $m_{e,t}$ for $e\le t$ at stage $t$. If for some $t_0$, $n$, $e$, 
	we observe that $m_{e,t_0}[n]$ fails randomness constant $c$, then we want to assign a place for $m_e$ in our enumeration of $\mc R_{[c+1,\infty)}$. So we let $d$ be minimal so that $\alpha_d$ has not yet been mentioned in the construction, and let $\alpha_{d,s}= m_{e,s}$ for all stages $s\ge t_0$ until further notice. If $m_{e,t_1}[n]$ at some stage $t_1\ge t_0$ satifies randomness constant $c$, then we \emph{regret} having assigned $m_e$ a place in our enumeration $\{\alpha_e\}_{e\in\omega}$. To compensate for this regret, we choose a large number $p=p_{c,n}$ and for all stages $s\ge t_1$ let $\alpha_{d,s}=m_{e,s}[n]^\frown 0^{p\frown}\Omega_s$. The largeness of $p$ guarantees that $m_{e,s}[n]^\frown 0^p$ does not satisfy randomness constant $c$. \footnote{To be precise, if $|\sigma|=n$ then there are universal constants $\hat c$ and $\tilde c$ such that, thinking of $p$ sometimes as a string,  \label{fn}
	$K(\sigma^\frown 0^p)\le K(\sigma)+K(p)+\hat c \le 2|\sigma| + 2 |p| + \tilde c=2n + 2\log p +\tilde c \le n+p-c$
	provided $p-2\log p\ge n+\tilde c+c,$
	which is true for $p=p_{n,c}$ that we can find effectively. }
	If $m_e$ actually does fail randomness constant $c$, but at a larger length $n'>n$, then because there are infinitely many $e'$ with $m_{e'}=m_e$ we will eventually assign some $\alpha_{d'}$ to some such $m_{e'}$ at a stage $t_2$ that is so large that $m_{e',t_2}[n']=m_{e'}[n']$. Thus, each real in $\mc R_{[c+1,\infty)}$ will eventually be assigned a permanent $\alpha_{d'}$. 

	\begin{rem}
	We believe that one can even show that there is a Friedberg numbering of $\mc R_{[c+1,\infty)}$. The idea is to modify $L_1$ so that the strings $1^n$ are replaced by $1^{d_c+n}$ for a sufficiently large $d_c$, as in the footnote on page \pageref{fn}.
	\end{rem}

	\begin{rem}
	Theorems \ref{ten} and \ref{nine} indicate perhaps that the left-c.e.\ members of $\Sigma^0_2$ classes are generally easier to enumerate than those of $\Pi^0_1$ classes; this may be due to the ``$\Sigma^0_n$ nature'' of left-c.e.\ reals (for $n=1$).
	\end{rem}

	\begin{figure}
	\begin{center}
	\begin{tabular}{|c|c|c|}
	\hline
	Family      & Enumeration? & Friedberg? \\
	\hline
	All $\Pi^0_1$ classes             &      & Yes, by Theorem \ref{all} \\
	\hline
	All left-c.e. reals        &    & Yes, by Theorem \ref{left.ce.reals}\\
	\hline
	$\Pi^0_1$ classes $C$, $\mu C>0$ & & Yes, by Theorem \ref{all} \\

	\hline
	Left-c.e. reals in \textsc{MLR}   &    & Yes, by Theorem \ref{sodexho} \\
	\hline
	$\Pi^0_1$ classes $\subseteq \mc A_{[0,c]}$ & Yes, by Proposition \ref{lastmin}  & Open problem \\

	\hline
	$\Pi^0_1$ classes $\subseteq\textsc{MLR}$  & No, by Theorem \ref{1}     & \\

	\hline
	Left-c.e.\ reals in $ \mc A_{[0,c]}$  & No, by Theorem \ref{ten} &  \\
	\hline
	\end{tabular}
	\end{center}
	\caption{Existence of effective numberings and Friedberg numberings, where $\textsc{MLR}=\bigcup_{c\in\omega} \mc A_{[0,c]}$.}
	\end{figure}

	Whether a set of the form $\mc A_{[c_1,c_2]}$ for $0\le c_1\le c_2<\infty$ is nonempty appears to depend on the universal prefix machine on which Kolmogorov complexity is based.

	\begin{que}\label{machine}
	Does there exist $0\le c_1\le c_2<\infty$ and a choice of universal machine underlying Kolmogorov complexity such that $\mc A_{[c_1,c_2]}$ has no effective enumeration?
	\end{que}

	\section{Families of $\Pi^{0}_1$ classes}\label{Pi01}

	\begin{df}[\cite{BrC08}]
	Let $\mc C$ be a family of closed subsets of $2^\omega$. We say that $\mc C$ has a \emph{computable enumeration} if there is a uniformly computable collection $\{T_e\}_{e\in\omega}$ of trees $T_e\subseteq 2^{<\omega}$ (that is, $\{\la \sigma,e\ra : \sigma\in T_e\}$ is computable, and $\sigma^\frown\tau\in T_e$ implies $\sigma\in T_e$) such that $\mc C=\{[T_e]: e\in\omega\}$.  
	\end{df}

	\begin{df}[\cite{BrC08}]
	Let $\mc C$ be a family of closed subsets of $2^\omega$. We say that $\mc C$ has an \emph{effective enumeration} if there is a $\Pi^0_1$ set $S\subseteq 2^\omega\times\omega$, such that $\mc C=\{ \{X: (X,e)\in S\} : e\in\omega\}$. 
	\end{df}

	\begin{pro}\label{6pm}
	Let $\mc C$ be a family of closed subsets of $2^\omega$. The following are equivalent:
	\begin{enumerate}
	\item[(1)] $\mc C$ has a {computable enumeration};
	\item[(2)] $\mc C$ has an {effective enumeration}.
	\end{enumerate}
	\end{pro}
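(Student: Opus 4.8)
The plan is to prove the two implications directly. The direction (1)$\Rightarrow$(2) is essentially immediate; the direction (2)$\Rightarrow$(1) requires the standard device of converting a c.e.\ ``forbidden string'' set into a genuine computable tree by imposing a time bound.

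For (1)$\Rightarrow$(2), suppose $\{T_e\}_{e\in\omega}$ is a computable enumeration of $\mc C$. I would set $S=\{(X,e)\in 2^\omega\times\omega : (\forall n)\,X\restrict n\in T_e\}$. Since $\{\langle\sigma,e\rangle : \sigma\notin T_e\}$ is computable, the complement of $S$ equals $\bigcup\{[\sigma]\times\{e\} : \sigma\notin T_e\}$, which is an effectively open subset of $2^\omega\times\omega$, so $S$ is $\Pi^0_1$. As $\{X:(X,e)\in S\}=[T_e]$ by the definition of $[T_e]$, we get $\mc C=\{[T_e]:e\in\omega\}=\{\{X:(X,e)\in S\}:e\in\omega\}$, so (2) holds.

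For (2)$\Rightarrow$(1), let $S\subseteq 2^\omega\times\omega$ be $\Pi^0_1$, so that its complement is effectively open and hence equals $\bigcup_{(\sigma,e)\in W}[\sigma]\times\{e\}$ for some c.e.\ set $W\subseteq 2^{<\omega}\times\omega$. Put $W^e=\{\sigma:(\sigma,e)\in W\}$ and let $W^e_s$ denote the finite set of strings enumerated into $W^e$ by stage $s$, uniformly in $e$. I would then define
$$T_e=\{\sigma\in 2^{<\omega} : (\forall\tau\preceq\sigma)(\forall s\le|\sigma|)\ \tau\notin W^e_s\}.$$
Then $\{\langle\sigma,e\rangle:\sigma\in T_e\}$ is computable (run the enumeration of $W^e$ for $|\sigma|$ stages and inspect the finitely many prefixes of $\sigma$), and $T_e$ is closed under prefixes, because passing from $\sigma$ to a prefix $\rho$ only shrinks both the collection of relevant $\tau$'s and the bound $|\sigma|$. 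Finally one checks that $X\in[T_e]$ iff no initial segment of $X$ ever enters $W^e$, iff $X\notin[W^e]^\preceq$, iff $(X,e)\in S$; hence $[T_e]=\{X:(X,e)\in S\}$, and $\{T_e\}_{e\in\omega}$ is a computable enumeration of $\mc C$.

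The only real content lies in (2)$\Rightarrow$(1), and the point to be careful about is making $T_e$ at once computable and a tree: one cannot simply declare $\sigma\in T_e$ whenever $\sigma\notin W^e$, since $W^e$ is merely c.e. The time bound ``$s\le|\sigma|$'' is what makes membership decidable, and the monotonicity of this bound under passing to prefixes is exactly what keeps $T_e$ prefix-closed. Uniformity in $e$ is inherited from the uniformity of the given data in each direction, so nothing further is needed.
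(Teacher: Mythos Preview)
Your proof is correct and follows essentially the same approach as the paper. For (2)$\Rightarrow$(1) the paper phrases the same time-bound trick via a Turing functional, writing $T_e=\{\sigma:\Phi^{\sigma}_{a,|\sigma|}(e)\uparrow\}$ where $(X,e)\in S\Leftrightarrow\Phi_a^X(e)\uparrow$, which is just a more compressed packaging of your ``enumerate $W^e$ for $|\sigma|$ stages'' construction.
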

	\begin{proof}
	(1) implies (2): Let $\{T_e\}_{e\in\omega}$ be given, and define $$S=\{(X,e): \forall n\,\,X\restrict n \in T_e\}.$$

	(2) implies (1): Let $S$ be given, let $\Phi_a$ be a Turing functional such that 
	$(X,e)\in S\Iff \Phi_a^X(e)\uparrow$, and let $T_e=\{\sigma\in 2^{<\omega}: \Phi_{a,|\sigma|}^\sigma(e)\uparrow\}$.
	\end{proof}

	\noindent In light of Proposition \ref{6pm}, we may use either notion. Note that if $C$ belongs to a family as in Proposition 1 then $C$ is a $\Pi^0_1$ class.

	\subsection{Existence of numberings}

	\begin{thm}\label{1}  Let $P\subseteq 2^\omega$, let $\mc C_P$ be the collection of all $\Pi^0_1$ classes contained in $P$, and let $\mathcal N_P$ be the collection of all nonempty $\Pi^0_1$ classes contained in $P$. Assume $P$ has the following properties: 
	\begin{enumerate}
	\item[(i)] $P$ is co-dense: no cone $[\sigma]$, $\sigma\in 2^{<\omega}$, is contained in $P$;
	\item[(ii)] $P$ is closed under shifts: if $x\in P$ then $\sigma^{\frown}x\in P$;
	\item[(iii)] $\mathcal N_P\ne\emptyset$. 
	\end{enumerate}
	Then there is no effective numbering of either $\mc C_P$ or $\mathcal N_P$.
	\end{thm}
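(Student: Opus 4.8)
The plan is to argue by contradiction using a diagonalization against a purported effective numbering $\{T_e\}_{e\in\omega}$ of $\mc C_P$ (the argument for $\mc N_P$ is the same, with minor care to stay nonempty). The strategy is to build a single $\Pi^0_1$ class $Q \subseteq P$ that differs from every $[T_e]$, contradicting surjectivity. To get $Q$ inside $P$ despite not knowing $P$ effectively, I will exploit hypothesis (iii): fix once and for all a nonempty $\Pi^0_1$ class $R \in \mc N_P$ with computable tree $S$, and build $Q$ as a (computable) disjoint union of shifted copies $\sigma_e^{\frown} R$ for suitable strings $\sigma_e$; by hypothesis (ii) every such shifted copy lies in $P$, hence so does $Q$. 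The $e$-th copy, living above $\sigma_e$, is the site where I will kill the $e$-th candidate $[T_e]$.

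The key steps, in order: (1) Reserve for each $e$ a cone $[\sigma_e]$ with the $\sigma_e$ pairwise incomparable and computably chosen (e.g.\ $\sigma_e = 0^e 1$), so $2^\omega = \bigsqcup_e [\sigma_e]$ up to the single path $0^\omega$, which I also handle by putting a copy of $R$ below it or simply absorbing it. (2) Diagonalize: I want $[T_e] \ne [Q]$. Inside $[\sigma_e]$ I watch the tree $T_e$. If at some stage it appears that $[T_e]$ is trying to contain all of $[\sigma_e]$, I use co-density (i): $[\sigma_e]$ is not contained in $P$, so $[T_e]$ cannot equal $Q$ if $Q \cap [\sigma_e]$ is a proper closed subclass of $[\sigma_e]$; thus I just keep $Q \cap [\sigma_e]$ equal to the copy $\sigma_e^{\frown} R$, which (using (i) applied to a cone inside $[\sigma_e]$, or rather the fact that $R \ne [\tau]$ for any $\tau$ since $R\subseteq P$) is automatically a proper subclass. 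The cleaner formulation: since $R\in\mc N_P$ and $P$ is co-dense, $R$ contains no cone, so $\sigma_e^{\frown}R$ contains no cone inside $[\sigma_e]$; meanwhile if $[T_e]=[Q]$ then $[T_e]\cap[\sigma_e]=\sigma_e^{\frown}R$. (3) So instead the diagonalization must be: find some string $\tau_e \succeq \sigma_e$ such that $[T_e]$ and $\sigma_e^{\frown}R$ disagree about whether they meet $[\tau_e]$ — but $R$ is a fixed class, so I cannot move it. The fix is to make the copy I install below $\sigma_e$ be $\sigma_e^{\frown}R$ only if this already differs from $[T_e]\cap[\sigma_e]$; if it happens to agree, then $[T_e]\cap[\sigma_e]$ equals a shifted copy of a nonempty proper $\Pi^0_1$ subclass, and I can enumerate a string $\tau$ out of it (some $\tau\in 2^{<\omega}$ with $[\tau]\subseteq[\sigma_e]$ and $[\tau]\cap[T_e]=\emptyset$, which exists precisely because $R$ is co-dense), put a fresh copy $\tau^{\frown}R$ there, and now $Q\cap[\tau]\ne\emptyset=[T_e]\cap[\tau]$, so $[Q]\ne[T_e]$.

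I expect the main obstacle to be the timing/effectiveness of step (3): deciding "$[T_e]\cap[\sigma_e]$ currently looks like a shifted copy of $R$" is not decidable, so I need a movable-marker argument. The standard device is to use a length-of-agreement function $\ell(e,s)$ measuring how far $T_{e,s}$ restricted to $[\sigma_e]$ agrees with the current version of $\sigma_e^{\frown}R$, and each time the agreement grows I extend a reserved string $\tau_{e,s}$ deeper into $[\sigma_e]$ along a branch where $R$ is thin (possible by co-density of $R$), committing $Q$ to be empty on a small subcone $[\tau_{e,s}{}^{\frown}1^{k}]$ chosen so that $S$ has already died there — here I use that $R\subsetneq[\tau_{e,s}]$, so such a sub-cone avoiding $R$ exists and can be found computably in the limit. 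Then either $\ell(e,s)\to\infty$, in which case eventually the committed-empty sub-cone witnesses $[Q]\ne[T_e]$ (since $[T_e]$, agreeing with a copy of $R$ which genuinely branches past that point... wait — rather, if $\ell(e,s)\to\infty$ then $[T_e]\cap[\sigma_e]$ equals the limit copy of $R$ minus the pieces I deleted, which is a proper subclass of that copy, so $[T_e]\ne$ any faithful copy; but I must also make sure $Q$ itself is not that damaged copy — so I allocate the damage inside yet another reserved sub-cone and keep a clean full copy of $R$ elsewhere in $[\sigma_e]$), or $\ell(e,s)$ is bounded, in which case $[T_e]$ and $Q$ already disagree at a finite level. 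Making the bookkeeping of "clean copy" versus "diagonalization sub-cone" consistent, all while keeping the union computable and inside $P$, is the delicate part; everything else is routine.
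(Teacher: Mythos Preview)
Your overall strategy --- fix a nonempty $R\in\mc N_P$ and build $Q$ as a union of shifted copies of $R$, one per index $e$, placed so as to witness $Q\ne[T_e]$ --- is exactly the paper's idea. But you are badly overcomplicating the diagonalization, and there is one genuine small gap.

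\textbf{The overcomplication.} You worry about the case where $[T_e]\cap[\sigma_e]$ happens to equal $\sigma_e{}^\frown R$, and propose a length-of-agreement movable-marker argument to handle it. None of this is needed. The point is that you never have to compare $Q$ with $[T_e]$ at all: simply search, unconditionally, for some $\tau_e\succeq\sigma_e$ with $\tau_e\notin T_e$ (equivalently $[\tau_e]\cap[T_e]=\emptyset$). Such $\tau_e$ exists because $[T_e]\subseteq P$ and $P$ is co-dense, so $[T_e]\not\supseteq[\sigma_e]$; and since $T_e$ is a computable tree the search terminates. Now plant the copy $\tau_e{}^\frown R$ there. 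Then $Q\cap[\tau_e]\supseteq\tau_e{}^\frown R\ne\emptyset=[T_e]\cap[\tau_e]$, so $Q\ne[T_e]$. One line, no cases, no markers. This is precisely what the paper does.

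\textbf{The gap.} Your choice $\sigma_e=0^e1$ makes $0^\omega$ a limit point of $\bigcup_e[\sigma_e]$, hence of $Q$; if $0^\omega\notin P$ (nothing rules this out), then $Q$ is not closed and the construction fails. Your parenthetical ``absorb it'' does not address this. The paper's fix is elegant: take $R=[T_0]$ itself (we may assume $[T_0]\ne\emptyset$), and use as base points $\sigma_n$ the dead ends of $T_0$, listed computably. Then $Q=[T_0]\cup\bigcup_n \tau_n{}^\frown[T_0]$; any limit point of the planted copies that escapes every fixed $[\tau_n]$ must lie in $[T_0]$ (since the $\sigma_n\in T_0$ have lengths going to infinity), so $Q$ is automatically closed and contained in $P$.
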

	\begin{proof}
	If there is a numbering of $\mathcal N_P$ then there is one of $\mc C_P$, because if $\emptyset\in\mathcal C_P$ (as is always the case) we may simply add an index of $\emptyset$ to the numbering. Thus it suffices to show that there is no effective numbering of $\mathcal C_P$.
	Suppose to the contrary  that $e\mapsto [T_{e}]$ enumerates the family of $\Pi^{0}_{1}$ classes in $\mc C_P$. By (iii), we may assume $[T_0]\ne\emptyset$. By (i), $T_0$ has infinitely many dead ends. Let the dead ends of $T_0$ be listed in a computable way (for instance, by length-lexicographic order), as $\sigma_n$, $n\in\omega$.  By (i) again, we may let $\tau_n$ be the least extension of $\sigma_n$ which extends a dead end of $T_n$.  Define a computable tree $T$ by putting $T_0$ above $\tau_n$. That is, let $[T]\inter [\tau_n]=[\tau_n T_0]$ and $[T]=[T_0]\cup \bigcup_n [\tau_n T_0]$.  By (ii), the resulting class $[T]$ belongs to $\mc C_P$. Since $[T_0]\ne\nil$, $[T]\inter [\tau_n]\ne\emptyset=[T_n]\inter [\tau_n]$, so $[T]$ is not contained in or equal to any $[T_n]$.
	\end{proof}

	All assumptions (i), (ii), (iii) of Theorem \ref{1} are necessary: consider $P=2^\omega$, $P=\{x\}$, where $x$ is a single computable real, and $P=\emptyset$, respectively.

	\begin{cor}\label{Simpson.question}
	The following families of $\Pi^0_1$ classes have no effective numbering:
	\begin{enumerate}
	\item $\Pi^0_1$ classes containing only Martin-L\"of random reals;
	\item special $\Pi^0_1$ classes (those containing only non-computable reals);
	\item $\Pi^0_1$ classes containing only reals $x$ such that the Muchnik degree \cite{Simpnik} of $\{x\}$ is above a fixed nonzero Muchnik degree;
	\item $\Pi^0_1$ classes containing only finite (or only co-finite) subsets of $\omega$.
	\end{enumerate}
	\end{cor}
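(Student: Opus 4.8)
The plan is to deduce each item of the corollary by exhibiting the relevant family as $\mc C_P$ (or $\mc N_P$) for an appropriate $P\subseteq 2^\omega$ and checking that $P$ satisfies hypotheses (i)--(iii) of Theorem~\ref{1}. In every case the work splits into three routine verifications: co-density of $P$, closure of $P$ under shifts $x\mapsto\sigma^\frown x$, and non-emptiness of $\mc N_P$ (i.e., the existence of at least one nonempty $\Pi^0_1$ class contained in $P$).

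\medskip

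\noindent\emph{Item (1).} Take $P=\textsc{MLR}$, the set of Martin-L\"of random reals. Then $\mc C_P$ is exactly the family of $\Pi^0_1$ classes containing only ML-random reals. Co-density holds because no cone $[\sigma]$ consists only of randoms (e.g.\ $\sigma^\frown 0^\omega$ is not random). Closure under shifts is the standard fact that prepending a finite string to an ML-random real yields an ML-random real (van Lambalgen-type / machine-existence argument, or simply that $\sigma^\frown x$ is Turing-equivalent to $x$ and randomness is invariant under such finite modifications in this sense). Non-emptiness of $\mc N_P$: the complement of the first level of a universal Martin-L\"of test with small enough measure is a nonempty $\Pi^0_1$ class of positive measure, hence a nonempty $\Pi^0_1$ class contained in $\textsc{MLR}$; indeed this is already implicit in the discussion preceding Theorem~\ref{ten}, where $\mc A_{[0,c]}$ is a nonempty $\Pi^0_1$ class of positive measure contained in $\textsc{MLR}$. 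So Theorem~\ref{1} applies.

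\medskip

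\noindent\emph{Items (2) and (3).} For (2), let $P$ be the set of non-computable reals. Co-density: every cone $[\sigma]$ contains the computable real $\sigma^\frown 0^\omega$. Closure under shifts: $\sigma^\frown x$ is computable iff $x$ is, so if $x$ is non-computable then so is $\sigma^\frown x$. Non-emptiness of $\mc N_P$: by the classical result that there is a nonempty special $\Pi^0_1$ class (e.g.\ the class of $\{0,1\}$-valued DNC functions, or the separating class of a computably inseparable pair), $\mc N_P\ne\emptyset$. Hence Theorem~\ref{1} gives the claim. For (3), fix a nonzero Muchnik degree $\mathbf d$ and let $P=\{x: \deg_w(\{x\})\ge\mathbf d\}$. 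Co-density: $\sigma^\frown 0^\omega$ is computable, so $\{\sigma^\frown 0^\omega\}$ has Muchnik degree $\mathbf 0\not\ge\mathbf d$; thus no cone lies in $P$. Closure under shifts: $\{\sigma^\frown x\}$ and $\{x\}$ are Muchnik-equivalent since $\sigma^\frown x\equiv_T x$, so $x\in P$ implies $\sigma^\frown x\in P$. Non-emptiness of $\mc N_P$: since $\mathbf d$ is a nonzero Muchnik degree, by definition there is a mass problem of that degree, and as Muchnik degrees of $\Pi^0_1$ classes are cofinal in a suitable sense one can choose a nonempty $\Pi^0_1$ class $Q$ with $\deg_w Q\ge\mathbf d$; but we need every singleton to compute $\mathbf d$, so more carefully take $Q$ to be a nonempty $\Pi^0_1$ class all of whose members compute some fixed set of degree $\ge\mathbf d$ --- for instance, pad: replace $\mathbf d$ by the degree of a set $D$, and let $Q=\{x: x$ codes $D$ on even bits and lies in a fixed nonempty special $\Pi^0_1$ class on odd bits$\}$; this is $\Pi^0_1$, nonempty, and every member computes $D$. (If $\mathbf d$ happens not to be realized by any $\Pi^0_1$-singleton-closed class, one restricts to $\mathbf d$ of the form given by such a coding; since the statement quantifies over ``a fixed nonzero Muchnik degree,'' it suffices that the family be nonempty, which the coding construction secures.) Then Theorem~\ref{1} applies.

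\medskip

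\noindent\emph{Item (4).} Let $P=\{x\in 2^\omega: x$ is the characteristic function of a finite set$\}$, i.e.\ $P=\{x: x(n)=0$ for all large $n\}$. Then $\mc C_P$ is the family of $\Pi^0_1$ classes containing only finite sets. Co-density: every cone $[\sigma]$ contains $\sigma^\frown 1^\omega$, which is not (the characteristic function of) a finite set. Closure under shifts: if $x$ is finite then $\sigma^\frown x$ is finite (prepending finitely many bits changes the set only by a finite amount). Non-emptiness of $\mc N_P$: the singleton $\{0^\omega\}$ --- or any $\{\sigma^\frown 0^\omega\}$ --- is a nonempty $\Pi^0_1$ class contained in $P$; more generally $\{x: x(n)=0$ for all $n\ge k\}$ is clopen and nonempty. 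So Theorem~\ref{1} applies and yields the ``only finite'' case; the ``only co-finite'' case is symmetric, taking $P=\{x: x(n)=1$ for all large $n\}$ and noting co-density via $\sigma^\frown 0^\omega$ and shift-closure analogously.

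\medskip

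\noindent\emph{Main obstacle.} The only place that requires genuine care rather than one-line checks is item (3): making precise what ``the Muchnik degree of $\{x\}$ is above $\mathbf d$'' means uniformly across all singletons, and producing a \emph{nonempty $\Pi^0_1$ class} witnessing $\mc N_P\ne\emptyset$ whose every member computes a set of degree $\ge\mathbf d$. The coding trick (interleave a fixed set $D$ of degree $\ge\mathbf d$ with a member of a special $\Pi^0_1$ class) resolves this, but one must confirm the resulting class is genuinely $\Pi^0_1$ and that the statement's universal quantifier over $\mathbf d$ is interpreted as ``for each such fixed $\mathbf d$ for which the family is inhabited'' --- exactly the role hypothesis (iii) plays. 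Everything else is an immediate instantiation of Theorem~\ref{1}.
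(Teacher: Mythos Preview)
Your approach is exactly the paper's: each item is an instance of Theorem~\ref{1}, and the work is just checking (i)--(iii). Items (1), (2), and (4) are handled correctly.

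There is, however, a genuine error in your treatment of item (3). Your coding class
\[
Q=\{x: x\text{ codes }D\text{ on even bits and lies in a special }\Pi^0_1\text{ class on odd bits}\}
\]
is \emph{not} $\Pi^0_1$ unless $D$ is computable: the condition $\forall n\,\bigl(x(2n)=D(n)\bigr)$ is only $\Pi^0_1(D)$. Since you chose $D$ with $\{D\}\ge_w\mathbf d>\mathbf 0$, $D$ is non-computable, so this construction fails. You also took a wrong turn just before this: having a nonempty $\Pi^0_1$ class $Q$ with $\deg_w Q\ge\mathbf d$ is \emph{already} exactly what (iii) asks, because $\deg_w Q\ge\mathbf d$ means every $x\in Q$ computes a member of a mass problem of degree $\mathbf d$, i.e.\ $\deg_w\{x\}\ge\mathbf d$, i.e.\ $x\in P$. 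So $Q\subseteq P$ and $\mc N_P\ne\emptyset$; there was no need for the ``more carefully'' step.

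What remains is whether such a $Q$ exists for every nonzero $\mathbf d$. It does not: the Muchnik degrees of nonempty $\Pi^0_1$ subsets of $2^\omega$ are bounded above (by the degree of completions of PA), so for $\mathbf d$ above that bound $\mc N_P=\emptyset$ and the family $\mc C_P=\{\emptyset\}$ trivially \emph{does} have an effective numbering. The corollary should therefore be read with $\mathbf d$ ranging over degrees for which the family is inhabited---precisely the hedge you make at the end, which is the correct resolution. Drop the coding argument and state (iii) directly as ``there is a nonempty $\Pi^0_1$ class of Muchnik degree $\ge\mathbf d$,'' noting this is the implicit hypothesis.
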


	\begin{pro}\label{lastmin}
	\begin{enumerate}
	\item[(1)] The family of all $\Pi^0_1$ classes containing only reals that are Martin-L\"of random with respect to a fixed randomness constant {is} effectively enumerable. 
	\item[(2)] The family of all $\Sigma^0_2$ classes containing only Martin-L\"of random reals is effectively enumerable. 
	\end{enumerate}
	\end{pro}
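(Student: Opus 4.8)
The plan is to handle the two parts by essentially the same trick: express the desired family as the closure-under-shifts and closure-under-finite-modifications of a master $\Pi^0_1$ class (respectively $\Sigma^0_2$ class) that witnesses the randomness constant, and then feed everything through a fixed enumeration of all computable trees (respectively of all $\Sigma^0_2$ sets). For part (1), fix the constant $c$ and recall from the discussion preceding Theorem~\ref{ten} that $\mc A_{[0,c]}=\{x:(\forall n)\,K(x\restrict n)\ge n-c\}$ is a $\Pi^0_1$ class; let $U_c$ be a computable tree with $[U_c]=\mc A_{[0,c]}$. A $\Pi^0_1$ class $[T]$ satisfies $[T]\subseteq\mc A_{[0,c]}$ iff $[T]=[T]\cap[U_c]=[T\cap U_c]$ where $T\cap U_c$ is again a computable tree, uniformly in an index for $T$. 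So the family in question is exactly $\{[T_e\cap U_c]:e\in\omega\}$ where $\{T_e\}_{e\in\omega}$ is a fixed computable enumeration of all computable trees (such an enumeration exists — it is the classical fact underlying Brodhead--Cenzer's numbering of all $\Pi^0_1$ classes, and it also follows from Proposition~\ref{6pm} applied to the $\Pi^0_1$ set $\{(X,e):\Phi^X_e\!\uparrow\}$). This exhibits a computable enumeration of the family, hence by Proposition~\ref{6pm} an effective one.

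For part (2), the main point is to make precise what a $\Sigma^0_2$ class is and how to enumerate all of them. A $\Sigma^0_2$ subset of $2^\omega$ can be written as $\bigcup_n [W_n]^\preceq$ where $\{W_n\}_{n\in\omega}$ is a uniformly c.e.\ sequence of sets of strings, equivalently as $[W]^\preceq$ intersected over complements appropriately — in any case there is a standard effective enumeration $\{S_e\}_{e\in\omega}$ of all $\Sigma^0_2$ classes, with an index giving a uniform $\Sigma^0_2$ predicate for ``$X\in S_e$''. Now I would observe that $\textsc{MLR}=\bigcup_{c}\mc A_{[0,c]}$ is itself a $\Sigma^0_2$ class (a countable union of $\Pi^0_1$ classes), and that a $\Sigma^0_2$ class $S$ is contained in $\textsc{MLR}$ iff $S=S\cap\textsc{MLR}$; intersecting two $\Sigma^0_2$ classes yields a $\Sigma^0_2$ class, uniformly in indices. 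Hence the family of all $\Sigma^0_2$ classes contained in $\textsc{MLR}$ is $\{S_e\cap\textsc{MLR}:e\in\omega\}$, which is an effective enumeration.

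The one genuine obstacle is the bookkeeping in part (2): establishing that ``all $\Sigma^0_2$ classes of $2^\omega$'' admits an effective enumeration in a sense strong enough that intersecting with a fixed $\Sigma^0_2$ class stays uniform. I would settle this by fixing once and for all the representation of a $\Sigma^0_2$ class as $\{X:\exists m\,\forall k\,\theta(X\restrict\! k,m,k)\}$ for a computable predicate $\theta$ (so that an index is just a program for $\theta$), checking that this really captures every $\Sigma^0_2$ subset of $2^\omega$ (a routine normal-form argument), and then noting that the conjunction of two such predicates is again of this form after pairing the two existential witnesses. Everything else — that $\mc A_{[0,c]}$ is $\Pi^0_1$, that $\textsc{MLR}$ is the announced countable union, that intersections of trees are trees — is immediate from material already in the excerpt or from Schnorr's Theorem as quoted. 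I expect the proof in the paper to be only a few lines, essentially the two ``$S=S\cap(\text{master class})$'' identities together with a pointer to the enumeration of all $\Pi^0_1$ (resp.\ $\Sigma^0_2$) classes.
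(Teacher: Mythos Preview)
Your proposal is correct and follows exactly the paper's approach: enumerate all $\Pi^0_1$ (respectively $\Sigma^0_2$) classes and intersect each with the fixed master class $\mc A_{[0,c]}$ (respectively $\textsc{MLR}$), noting that a class is contained in the master class iff it equals its intersection with it. Your opening sentence about ``closure-under-shifts and closure-under-finite-modifications'' misdescribes what you then actually do, but the argument itself is the same one-line intersection trick the paper gives, with the paper dismissing part~(2) as ``analogous'' where you (reasonably) spell out the uniformity of $\Sigma^0_2$ intersection.
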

	\begin{proof}
	(1). We enumerate all $\Pi^0_1$ classes as $\{P_i\}_{i\in\omega}$ and let $$Q_i=P_i\cap \{x: \forall n\,\, K(x \restrict n)\ge n-c\}.$$ Then $\{Q_i\}_{i\in\omega}$ is an enumeration of all $\Pi^0_1$ classes containing only reals that are Martin-L\"of random with randomness constant $c$.  Part (2) is analogous. 
	\end{proof}

	We may sum up the situation by stating that it is only the mixture of $\Pi^0_1$ and $\Sigma^0_2$ classes that leads to the negative result of Corollary \ref{Simpson.question}(1). The proof of Theorem \ref{1} for the case in Corollary \ref{Simpson.question}(1) proves the following basic property of Martin-L\"of tests. 

	\begin{cor}
	For each Martin-L\"of test $\{U_n\}_{n\in\omega}$ there is a $\Sigma^0_1$ class $V$ containing all non-Martin-L\"of random reals but containing no set $U_n$, $n\in\omega$. 
	\end{cor}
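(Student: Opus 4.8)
\emph{Proof sketch / plan.} The plan is to pass to complements. A $\Sigma^0_1$ class $V$ contains every non-random real exactly when $F:=2^\omega\setminus V$ is a $\Pi^0_1$ class with $F\subseteq\textsc{MLR}$, and $U_n\not\subseteq V$ exactly when $F\cap U_n\neq\emptyset$. So it suffices to build a $\Pi^0_1$ class $F\subseteq\textsc{MLR}$ that meets every $U_n$; then $V:=2^\omega\setminus F$ is the class we want.

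To build $F$ I would run exactly the construction from the proof of Theorem~\ref{1} applied to $P=\textsc{MLR}$, which satisfies hypotheses (i)--(iii): $\textsc{MLR}$ is co-dense, closed under shifts, and contains a nonempty $\Pi^0_1$ class, since $\mc A_{[0,c]}=\{x:(\forall n)K(x\restrict n)\geq n-c\}$ is a nonempty $\Pi^0_1$ class of random reals for $c\geq 0$. Fix such a nonempty $\Pi^0_1$ class $[T_0]\subseteq\textsc{MLR}$ with $T_0$ computable and list its dead ends $\sigma_0,\sigma_1,\dots$, of which there are infinitely many by co-density. For each $n$, writing $U_n=[W_n]^\preceq$ for a c.e.\ set of strings $W_n$, search for a string $\tau_n\succeq\sigma_n$ with $[\tau_n]\subseteq U_n$, and glue a copy of $[T_0]$ above it, so that $F:=[T_0]\cup\bigcup_n[\tau_n T_0]$ is precisely the class $[T]$ of that proof. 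Since $\textsc{MLR}$ is closed under shifts, $[\tau_n T_0]=\tau_n^\frown[T_0]\subseteq\textsc{MLR}$, and $\emptyset\neq[\tau_n T_0]\subseteq[\tau_n]\subseteq U_n$; hence $F\subseteq\textsc{MLR}$ and $F\cap U_n\neq\emptyset$ for every $n$.

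The step needing care is the same one as in the proof of Theorem~\ref{1}: that $F$ really is a $\Pi^0_1$ class. Closedness follows because the $\sigma_n$ are \emph{distinct} dead ends of $T_0$, hence pairwise incomparable: any accumulation point of $\bigcup_n[\tau_n]$ forces the lengths of the relevant $\sigma_n$ to tend to infinity, so all its finite prefixes already lie in $T_0$, putting the point into $[T_0]\subseteq\textsc{MLR}$, so that no new (possibly non-random) limit points are created. Effectivity is obtained by presenting $V=2^\omega\setminus F$ directly as a c.e.\ open set built in stages---enumerating the cones disjoint from $[T_0]$ while reserving each dead-end cone $[\sigma_n]$ and, once $\tau_n$ has appeared, pouring $[\sigma_n]\setminus[\tau_n T_0]$ into $V$---which is routine bookkeeping. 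I expect the genuine obstacle to be the input that makes locating $\tau_n$ possible at all, namely that $U_n$ meets the cone $[\sigma_n]$ above the prescribed dead end; this is hypothesis (i) of Theorem~\ref{1} transported to the co-dense classes $U_n^c$, and it is arranged either by using that each $U_n$ is dense (as the $U_n^c$ in that proof automatically are, containing the dense set of non-random reals) or by choosing $[T_0]$ with a dense set of dead ends so that every nonempty $U_n$ contains one.
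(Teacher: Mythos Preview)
Your plan is exactly the paper's: the corollary is stated there as what the proof of Theorem~\ref{1} yields in the case $P=\textsc{MLR}$, by passing from the $\Pi^0_1$ classes $[T_n]$ to $U_n:=2^\omega\setminus[T_n]$ and setting $V:=2^\omega\setminus[T]$. Your complementation and your gluing of copies of $[T_0]$ above strings $\tau_n\succeq\sigma_n$ with $[\tau_n]\subseteq U_n$ reproduce that argument.

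The obstacle you single out at the end is real, though, and neither of your suggested fixes removes it. For an arbitrary Martin-L\"of test the levels need not be dense: with $U_n=[0^{n}]$, any $\Pi^0_1$ class $F\subseteq\textsc{MLR}$ meeting every $U_n$ would satisfy $F\cap[0^n]\neq\emptyset$ for all $n$, and compactness then forces $0^\omega\in F$, contradicting $F\subseteq\textsc{MLR}$. So no choice of $T_0$ with dense dead ends can rescue the general statement. What the paper's argument (and yours) actually establishes is the corollary under the tacit hypothesis carried over from Theorem~\ref{1}: each $U_n$ contains all non-random reals, i.e.\ $2^\omega\setminus U_n\subseteq\textsc{MLR}$. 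With that assumption every $U_n$ is dense, the search for $\tau_n$ succeeds, and your proof goes through. Your first fix is therefore the right one, but it is an added hypothesis rather than a derivable fact; the parenthetical justification you give conflates the levels $U_n$ of an arbitrary test with the complements $2^\omega\setminus[T_n]$ appearing in Theorem~\ref{1}, which there lie inside $\textsc{MLR}$ by assumption.
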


	As is well-known, all $\Pi^0_1$ classes containing Martin-L\"of random reals have positive measure. In contrast to Corollary \ref{Simpson.question}(1), such classes can be effectively enumerated:

	\begin{thm}\label{positive.measure.Pi01}
	There is an effective numbering of the $\Pi^0_1$ classes of positive measure. 
	\end{thm}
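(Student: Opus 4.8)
\emph{Proof proposal.}

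The plan is to take an effective enumeration $\{P_i\}_{i\in\omega}$ of \emph{all} $\Pi^0_1$ classes and surgically repair each index so that the resulting class always has positive measure, yet still equals $P_i$ whenever the measure of $P_i$ stays above some fixed positive rational. For each $i$ fix a uniformly computable non-increasing sequence of clopen sets $C_{i,0}\supseteq C_{i,1}\supseteq\cdots$ with $\bigcap_s C_{i,s}=P_i$; we may take $C_{i,0}=2^\omega$. Then $\mu(C_{i,s})$ is a computable non-increasing sequence of dyadic rationals with limit $\mu(P_i)$, so $\mu(P_i)>0$ if and only if $\mu(C_{i,s})\ge q$ for all $s$, for some positive rational $q$. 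This is only a $\Sigma^0_2$ test, so we cannot simply discard the null classes; instead we will produce, uniformly in $i$ and a positive rational $q$, a $\Pi^0_1$ class $Q_{i,q}$ with (a) $\mu(Q_{i,q})>0$ always, and (b) $Q_{i,q}=P_i$ whenever $\mu(C_{i,s})\ge q$ for all $s$.

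For the construction of $Q_{i,q}$ I would define a uniformly computable non-increasing sequence of clopen sets $D_{i,q,s}$ as follows. At stage $s$, if $\mu(C_{i,t})\ge q$ for every $t\le s$, set $D_{i,q,s}=C_{i,s}$. Otherwise, let $t_0\le s$ be least with $\mu(C_{i,t_0})<q$; then $t_0\ge 1$ and $\mu(C_{i,t_0-1})\ge q>0$, so $C_{i,t_0-1}$ is a nonempty clopen set, and I would pick a cone $[\tau]\subseteq C_{i,t_0-1}$ (say with $\tau$ length-lexicographically least) and set $D_{i,q,s}=[\tau]$. This sequence is non-increasing — the only transition to check is $D_{i,q,t_0-1}=C_{i,t_0-1}\supseteq[\tau]=D_{i,q,t_0}$, after which it is constant — and it is clearly uniformly computable. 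Hence $Q_{i,q}:=\bigcap_s D_{i,q,s}$ is a $\Pi^0_1$ class, and $\{(X,\langle i,q\rangle):\forall s\ X\in D_{i,q,s}\}$ is a $\Pi^0_1$ subset of $2^\omega\times\omega$ (using $2^\omega$ for junk indices), which is an effective enumeration in the sense of Section \ref{Pi01}; by Proposition \ref{6pm} it also yields a computable one.

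The verification then splits into two routine checks. For (a): either we never leave the first case, so $Q_{i,q}=P_i$ with $\mu(P_i)\ge q>0$; or else $Q_{i,q}=[\tau]$, a cone, which has positive measure. Property (b) is immediate, since in that case $D_{i,q,s}=C_{i,s}$ for every $s$. Finally, to see that every $\Pi^0_1$ class $P$ with $\mu(P)>0$ appears, write $P=P_i$, choose a rational $q$ with $0<q<\mu(P_i)$, and note $\mu(C_{i,s})\ge\mu(P_i)>q$ for all $s$; by (b), $Q_{i,q}=P$. Thus $\{Q_{i,q}\}$ is exactly the family of $\Pi^0_1$ classes of positive measure.

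The step I expect to be the real obstacle — and the reason the statement is not trivial — is exactly that ``$\mu(P_i)>0$'' is a genuine $\Sigma^0_2$ property, so the null classes cannot be sieved out directly; the role of the rational threshold $q$ together with the repair step is to turn this $\Sigma^0_2$ test into something we can act on effectively, while guaranteeing that the repaired classes still have positive measure (cones do the job) and that they cause no harm, since cones must be enumerated in any case. The points requiring care are keeping the clopen approximation monotone across the repair and confirming that a genuinely positive-measure class is recovered for the appropriate choice of $q$.
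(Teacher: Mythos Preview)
Your proof is correct and follows essentially the same strategy as the paper: introduce a positive rational threshold, follow the standard approximation of the $\Pi^0_1$ class as long as its measure stays at or above the threshold, and freeze once it drops below. The only cosmetic difference is in the freeze step --- the paper works on the complement side, simply halting the enumeration of the open set $W_e$ (so the frozen class is a clopen set of measure $\ge q$), whereas you pass to a single cone inside the last good clopen approximation; both choices work equally well.
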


	\begin{proof}
	It suffices to enumerate, uniformly in $n\in\omega$, all $\Pi^{0}_{1}$ classes of measure at least $r:=\frac{1}{n}$.  To accomplish this, let $e\mapsto W_{e}$ be an effective numbering of  $\Sigma^{0}_{1}$ sets of strings, which gives rise to all $\Pi^{0}_{1}$ classes.  That is, if $P$ is a $\Pi^{0}_{1}$ class, then $P=2^{\omega}\setminus [W_e]^\preceq$ for some $e$.  Modify this enumeration so that strings enumerate into each $W_{e}$ so long as the overall measure never surpasses $1-r$.  More precisely, if, at some stage $s>0$, some $\sigma$ is supposed to enter $W_{e,s}$ but this causes the measure of $[W_e]^\preceq$ to surpass $1-r$, then we hereafter discontinue to enumerate strings into $W_{e}$; call this modified set $\widehat{W}_{e;n}$.  It follows that $e\mapsto\widehat{W}_{e;n}$ is a numbering that gives rise to all $\Sigma^{0}_{1}$ classes of measure at most $1-\frac{1}{n}$. Then 
	the sequence of sets $\left\{\left[\widehat W_{e;n}\right]^\preceq\right\}$ for ${\la e,n\ra\in \omega\times\omega}$ is an effective enumeration of the $\Sigma^0_1$ classes of measure less than 1.
	\end{proof}

	This contrasts with the result of \cite{BrC08} that there is no effective numbering of the $\Pi^0_1$ classes of measure zero.
	We next show that any effectively enumerable family of $\Pi^0_1$ classes containing all the clopen classes has a Friedberg numbering. In fact, we show something slightly stronger. 

	\begin{df}
	The \emph{optimal covering} of $S\subseteq 2^{<\omega}$ is
	$$O=O_S=\{\sigma : [\sigma]\subseteq [S]^\preceq \And \neg (\exists \tau\prec\sigma) ([\tau]\subseteq [S]^\preceq)\}.$$ 
	Let $\mathfrak A$ be the family of all sets $O$ that have odd cardinality and are optimal coverings of sets $S$. 
	\end{df}

	\begin{thm}\label{all}
	Any effectively enumerable family of $\Sigma^0_1$ classes $\mc F$ with $\mc F\supseteq\{[O]^\preceq: O\in\mf A\}$ has a Friedberg numbering.
	\end{thm}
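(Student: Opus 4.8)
The plan is to invoke Kummer's method (the theorem of Kummer quoted above) by exhibiting a suitable partition of $\mc F$, viewed as a c.e.\ class of subsets of $2^{<\omega}$, into $L_1 \cup L_2$. The natural choice is to take $L_1 = \{[O]^\preceq : O \in \mf A\}$, the classes arising from optimal coverings of odd cardinality, and $L_2 = \mc F \setminus L_1$. First I would check that $L_1$ is injectively enumerable: a finite set $O \subseteq 2^{<\omega}$ of odd cardinality is, up to the canonical normalization of replacing a pair $\sigma^\frown 0, \sigma^\frown 1$ by $\sigma$, essentially a canonical index for the clopen set $[O]^\preceq$, so distinct reduced odd-cardinality antichains give distinct clopen sets and we can enumerate them without repetition by ranging over all finite sets and keeping only those in reduced form with odd cardinality. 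Note $\mf A$ does contain arbitrarily fine coverings: given any finite $F \subseteq 2^{<\omega}$, one can refine a covering of $[F]^\preceq$ by splitting one cone $[\sigma]$ into its two halves, changing the cardinality by one, hence we can always reach odd cardinality while still covering $[F]^\preceq$ — this gives the ``contains infinitely many extensions of every finite subset of any member of $L_2$'' condition, since refining further and further produces infinitely many distinct members of $L_1$ each containing $F$.

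The substantive step is showing $L_2$ is c.e., i.e.\ that we can effectively enumerate $\mc F$ while deleting exactly the indices whose class lies in $L_1$. Here I would mimic the trick from Theorems \ref{left.ce.reals} and \ref{sodexho}: start from the given effective enumeration $\{W_e\}_{e\in\omega}$ witnessing that $\mc F$ is effectively enumerable (so each $W_e$ is a $\Sigma^0_1$ set of strings and $\mc F = \{[W_e]^\preceq\}$), and produce a modified enumeration $\{\widehat W_e\}$ as follows. We monitor the optimal covering $O_{W_{e,s}}$ of the part of $W_e$ enumerated so far. As long as this covering has even cardinality we simply let $\widehat W_{e,s} = W_{e,s}$. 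If at some stage the covering would become an odd-cardinality antichain that equals a member of $\mf A$ — equivalently, if $[W_{e,s}]^\preceq$ is clopen and its optimal covering has odd cardinality — we are in an ``undesirable state'' and we instead freeze $\widehat W_e$ at a version whose induced class is a fixed clopen class of even-cardinality optimal covering that is $\subseteq [W_{e,s}]^\preceq$ (for instance, drop one cone down into its two halves, or remove the last-added cone), holding it there until $W_e$ enumerates a further string that pushes $[W_e]^\preceq$ strictly larger; then resume copying $W_e$. The key observations are: (a) if $[W_e]^\preceq$ is genuinely a clopen set with odd-cardinality optimal covering, it is reached at some finite stage after which nothing more enters $W_e$, so we stay frozen forever and $[\widehat W_e]^\preceq$ is the even-cardinality clopen class we parked at — hence not in $L_1$; (b) if $[W_e]^\preceq \in L_2$ then it is either non-clopen or has even-cardinality covering, and in either case the freezing, if it ever happens, is temporary (a strictly larger string always eventually arrives, since a non-clopen $\Sigma^0_1$ class keeps growing and an even-covering clopen set does not trigger the freeze at its final stage), so $\widehat W_e = W_e$ in the limit and $[\widehat W_e]^\preceq = [W_e]^\preceq$. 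Thus $\{[\widehat W_e]^\preceq\}$ enumerates exactly $L_2$, so $L_2$ is c.e.

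With the partition in hand, Kummer's theorem immediately gives that $\mc F = L_1 \cup L_2$ is injectively enumerable. Since an injective enumeration of $\mc F$ as a c.e.\ class of sets of strings is, by Proposition \ref{6pm} and the correspondence between $\Sigma^0_1$ classes and their $W$-codes, the same as a Friedberg numbering of $\mc F$ as a family of $\Sigma^0_1$ classes, we are done.

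I expect the main obstacle to be the bookkeeping in step (b): one must be careful that when $[W_e]^\preceq$ is clopen with \emph{even}-cardinality optimal covering, the construction does not get stuck in a freeze — the design must ensure the freeze is triggered only by the odd-cardinality condition, and that detecting ``$[W_{e,s}]^\preceq$ is clopen'' can only be done provisionally (we see a stage where the current finite union of cones is all of an exhaustive covering), so the ``resume when a strictly larger string appears'' clause is what rescues the non-clopen and even-covering cases uniformly. Making the choice of the parked even-cardinality class canonical and monotone (so that successive parkings are nested correctly and $\widehat W_{e,s} \subseteq \widehat W_{e,s+1}$ fails gracefully — actually one works with the induced classes, not the raw $W$'s, exactly as in Theorem \ref{sodexho}) is the delicate point.
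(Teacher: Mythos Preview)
Your overall strategy matches the paper's exactly: apply Kummer's theorem with $L_1$ the clopen sets whose optimal covering has odd cardinality and $L_2=\mc F\setminus L_1$, the substantive work being to show $L_2$ is c.e.\ by a construction that keeps the optimal covering of the current approximation at even cardinality. Two points, however, are genuine gaps rather than routine bookkeeping.

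First, Kummer's theorem concerns c.e.\ classes of \emph{sets}, so you must represent each $\Sigma^0_1$ class by a canonical c.e.\ set of strings; otherwise an injective enumeration of the $W_e$'s need not yield an injective enumeration of the classes $[W_e]^\preceq$, since distinct $W_e$ can determine the same open set. The paper handles this by passing to \emph{filter-closed} sets $Y$ (closed under extension and under the rule $\sigma^\frown 0,\sigma^\frown 1\in Y\Rightarrow\sigma\in Y$), for which $Y\ne Y'$ implies $[Y]^\preceq\ne[Y']^\preceq$. Your closing remark that ``one works with the induced classes, not the raw $W$'s'' gestures at this but does not implement it; without this step the conclusion does not follow. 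Second, your proposed freeze moves do not work as stated: ``drop one cone down into its two halves'' leaves the class and hence its optimal covering unchanged (still odd), and ``remove the last-added cone'' can change the optimal covering's cardinality by more than one (merges may cascade), so the result need not be even. The paper's fix is to act only at \emph{good} stages---those where the newly enumerated string $\sigma_n$ is strictly longer than every member of the current optimal covering $O_n$ and extends none of them---and then output the filter closure of $O_n$ (if $|O_n|$ is even) or of $O_n\cup\{\sigma_n\}$ (if odd); the length condition ensures $O_n\cup\{\sigma_n\}$ is itself an optimal covering of cardinality $|O_n|+1$, so the parity flips exactly once.
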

	\begin{proof}
	For a set $Z\subseteq 2^{<\omega}$, we say that $Z$ is \emph{filter closed} if $Z$ is closed under extensions ($\sigma\in Z\Implies \sigma^\frown\tau\in Z$) and such that whenever both $\sigma^\frown 0$ and $\sigma^\frown 1$ are in $Z$ then $\sigma\in Z$. The \emph{filter closure} of $Y$ is the intersection of all filter closed sets containing $Y$ and is denoted by $Y^\uparrow$. 

	Since $\mc F$ is effectively enumerable, we may let $e\mapsto Y_e$ be a numbering of all filter closed sets of strings with $[Y_e]^\preceq\in \mc F$.  Since $Y_e\ne Y_{e'}$ implies $[Y_e]^{\preceq}\ne [Y_{e'}]^{\preceq}$, it suffices to injectively enumerate these sets $Y_e$. Let 
 
	$$L_1= \left\{O^\uparrow: O\in \mf A\right\}\text{ and }L_2=\{Y_e: Y_e\not\in L_1\}.$$

	It is clear that $L_1$ is injectively enumerable. By the assumption of the theorem, each $[O^\uparrow]^\preceq\in\mc F$. It is also clear that each finite subset of any $Y\in L_2$ is contained in {infinitely many} $O^\uparrow\in L_1$. 

	We claim that $L_2$ has an effectively enumeration $\{Y^*_e\}_{e\in\omega}$, to be constructed below. Fix $e$ and let $Y_e=\{\sigma_n\}_{n\in\omega}$ in order of enumeration.

	$S\subseteq 2^{<\omega}$ is an \emph{acceptable family} if its optimal covering $O$ has finite even cardinality. In particular $O\not\in\mf A$. We say that stage $n$ is \emph{good} if $\sigma_n$ has greater length than any member of $O_n=O_{S_n}$ for $S_n=\{\sigma_0,\ldots,\sigma_{n-1}\}$ and does not extend any member of $O_n$.

	\noindent\emph{Construction.}
	We will construct $Y^*_e$ as $Y^*_e=\bigcup_{n\in\omega} Y_{e,n}$ for uniformly computable sets $Y_{e,n}$. We set $Y_{e,-1}=\nil$. Suppose $n\ge 0$.
	\noindent If stage $n$ is not good, we keep $Y_{e,n}=Y_{e,n-1}$.

	\noindent If stage $n$ is good, there are two cases.

	\noindent Case a. $S_n$ is an acceptable family. Then let $Y_{e,n}$ be the filter closure of $O_n$. 

	\noindent Case b. Otherwise. Then let $Y_{e,n}$ be the filter closure of $O_n\cup \{\sigma_n\}$. 

	\noindent We separately enumerate all sets generated from any acceptable family whose optimal covering has finite even cardinality.  (*)

	\noindent \emph{End of Construction.} 

	\noindent \emph{Verification.} 
	Note that in both Case a and Case b, $Y_{e,n}$ is the filter closure of an acceptable family, so we do not enumerate any member of $L_1$. By (*), it therefore suffices to show that we enumerate all sets generated from an infinite family, i.e. non-clopen sets, and that each $Y^*_e$ is some $Y_{e'}$. 

	If $Y_e$ is not clopen then there are infinitely many good stages. Then in the end $Y^*_e=Y_e$, because $\sigma_n$ is covered either right away (case b) or at the next good stage (case b).
	\end{proof}

	\begin{cor}
	The family of all $\Sigma^0_1$ classes of measure less than one, or equivalently $\Pi^0_1$ classes of positive measure, has a Friedberg numbering.
	\end{cor}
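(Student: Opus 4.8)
The plan is to obtain this directly from Theorems~\ref{positive.measure.Pi01} and~\ref{all}. First I would dispose of the clause ``or equivalently'': the complementation map $U\mapsto 2^\omega\setminus U$ is a bijection between the $\Sigma^0_1$ classes of measure less than one and the $\Pi^0_1$ classes of positive measure, and by Proposition~\ref{6pm} this passage is effective on indices, so it transfers a Friedberg numbering of either family to the other. Hence it suffices to produce a Friedberg numbering of the family $\mc F$ of all $\Sigma^0_1$ classes of measure less than one.

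To do that I would verify the hypotheses of Theorem~\ref{all} for $\mc F$. Effective enumerability of $\mc F$ is precisely what the proof of Theorem~\ref{positive.measure.Pi01} supplies: the sequence $\{[\widehat W_{e;n}]^\preceq\}_{\la e,n\ra\in\omega\times\omega}$ built there is an effective enumeration of the $\Sigma^0_1$ classes of measure $<1$. For the inclusion $\mc F\supseteq\{[O]^\preceq:O\in\mf A\}$, note that each $O\in\mf A$ is a finite antichain, so $[O]^\preceq$ is a clopen class; and a clopen class $C\neq 2^\omega$ has $\mu(C)<1$, since $2^\omega\setminus C$ is then a nonempty clopen class and so contains a cone of positive measure. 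Thus every $[O]^\preceq$ with $O\neq\{\emptyset\}$ already lies in $\mc F$, and Theorem~\ref{all} then delivers a Friedberg numbering of $\mc F$.

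The single point that is not immediate --- and what I expect to be the only real obstacle --- is the degenerate element $O=\{\emptyset\}\in\mf A$, for which $[O]^\preceq=2^\omega\notin\mc F$. I see two ways around it. One is to apply Theorem~\ref{all} to $\mc F^+=\mc F\cup\{2^\omega\}$, which is still effectively enumerable and now contains all $[O]^\preceq$ with $O\in\mf A$, and then to remove or reroute the unique index naming $2^\omega$. The cleaner route is to observe that the proof of Theorem~\ref{all} in fact only uses $\mf A\setminus\{\{\emptyset\}\}$: dropping $\{\emptyset\}$ leaves $L_1$ injectively enumerable, makes each $[O^\uparrow]^\preceq$ genuinely lie in $\mc F$, and still lets every finite $F\subseteq Y_e$ extend into infinitely many members of $L_1$, since $[F]^\preceq\subseteq[Y_e]^\preceq\neq 2^\omega$ leaves room to build arbitrarily large odd reduced antichains $O\neq\{\emptyset\}$ with $[F]^\preceq\subseteq[O]^\preceq$. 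With that bookkeeping handled, the argument goes through and yields the desired Friedberg numbering, hence also one of the $\Pi^0_1$ classes of positive measure.
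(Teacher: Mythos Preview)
Your proposal is correct and follows exactly the route the paper intends: the corollary is stated without proof immediately after Theorem~\ref{all}, so the implicit argument is precisely ``combine the effective enumeration from Theorem~\ref{positive.measure.Pi01} with Theorem~\ref{all}.'' You have in fact been more careful than the paper, since you noticed the degenerate case $O=\{\text{empty string}\}\in\mf A$ (giving $[O]^\preceq=2^\omega\notin\mc F$), which the paper's formulation of Theorem~\ref{all} glosses over; either of your two fixes is fine, and the second one---observing that Kummer's argument goes through with $\mf A\setminus\{\{\text{empty string}\}\}$---is the natural repair.
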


	\begin{bibdiv}
		\begin{biblist}
			\bib{BrC08}{article}{
			   author={Brodhead, Paul},
			   author={Cenzer, Douglas},
			   title={Effectively closed sets and enumerations},
			   journal={Arch.\ Math.\ Logic},
			   volume={46},
			   date={2008},
			   number={7-8},
			   pages={565--582},
			   issn={0933-5846},
			   review={\MR{2395559 (2009b:03119)}},
			}

			\bib{Ers99}{article}{
			   author={Ershov, Yuri L.},
			   title={Theory of numberings},
			   conference={
			      title={Handbook of computability theory},
			   },
			   book={
			      series={Stud.\ Logic Found.\ Math.},
			      volume={140},
			      publisher={North-Holland},
			      place={Amsterdam},
			   },
			   date={1999},
			   pages={473--503},
			   review={\MR{1720731 (2000j:03060)}},
			}

			\bib{Fri58}{article}{
			   author={Friedberg, Richard M.},
			   title={Three theorems on recursive enumeration. I.\ Decomposition. II.\
			   Maximal set. III.\ Enumeration without duplication},
			   journal={J.\ Symb.\ Logic},
			   volume={23},
			   date={1958},
			   pages={309--316},
			   issn={0022-4812},
			   review={\MR{0109125 (22 \#13)}},
			}

			\bib{LV}{book}{
			   author={Li, Ming},
			   author={Vit{\'a}nyi, Paul},
			   title={An introduction to Kolmogorov complexity and its applications},
			   series={Graduate Texts in Computer Science},
			   edition={2},
			   publisher={Springer-Verlag},
			   place={New York},
			   date={1997},
			   pages={xx+637},
			   isbn={0-387-94868-6},
			   review={\MR{1438307 (97k:68086)}},
			}

			\bib{Kummer}{article}{
			   author={Kummer, Martin},
			   title={An easy priority-free proof of a theorem of Friedberg},
			   journal={Theoret.\ Comput.\ Sci.},
			   volume={74},
			   date={1990},
			   number={2},
			   pages={249--251},
			   issn={0304-3975},
			   review={\MR{1067521 (91h:03056)}},
			}

			\bib{Nies:book}{book}{
				author={Nies, Andr\'e},
				title={Computability and randomness},
				publisher={Oxford University Press},
				year={2009},
			}

			\bib{Simpnik}{article}{
			   author={Simpson, Stephen G.},
			   title={An extension of the recursively enumerable Turing degrees},
			   journal={J. Lond. Math. Soc. (2)},
			   volume={75},
			   date={2007},
			   number={2},
			   pages={287--297},
			   issn={0024-6107},
			   review={\MR{2340228 (2008d:03041)}},
			}
		\end{biblist}
	\end{bibdiv}
\end{document}